\newcommand{\bC}{{\mathbb{C}}}
\newcommand{\bP}{{\mathbb P}}
\newcommand{\bR}{{\mathbb{R}}}
\theoremstyle{plain}
\newtheorem{thm}{Theorem}
\newtheorem{lem}{Lemma}
\newtheorem{cor}{Corollary}
\theoremstyle{definition}
\newtheorem{remark}{Remark}
\title[Hermitian Curvature and Plurisubharmonicity of Energy ]
{Hermitian Curvature and Plurisubharmonicity of Energy on Teichm\"uller Space}
\author{Domingo ~Toledo}
\address{ Mathematics Department, 
University of Utah, 
Salt Lake City, Utah 84112  USA  }
\email{ toledo@math.utah.edu }
 \thanks{Author partially supported by NSF grants 
DMS 0200877,  DMS-0600816 and by a grant from the Simons Foundation 208853}
\subjclass[2010]
{32G15,58E20}
\date{\today}
\begin{document}
\maketitle

\begin{abstract}
Let $M$ be a closed Riemann surface,  $N$ a Riemannian manifold of Hermitian non-positive curvature,  $f:M\to N$ a continuous map, and   $E$  the function on the Teichm\"uller space of $M$ that  assigns to a complex structure on $M$ the energy of the harmonic map homotopic to $f$.   We show that $E$ is a plurisubharmonic function on the Teichm\"uller space of $M$.  If $N$ has strictly negative Hermitian curvature, we characterize the directions in which the complex Hessian of $E$ vanishes.
\end{abstract}

\section{Introduction}

Let $M$ be a closed oriented surface,  let $N$ be a (compact) manifold of non-positive curvature, and fix a homotopy class of continuous maps from $M$ to $N$.  For each complex structure $J$ on $M$ there exists a harmonic map $f_J:(M,J)\to N$ in the given homotopy class.  In situations where this map is unique it varies smoothly with $J$ and its energy $E(f_J)$ defines a smooth function on the space of complex structures on $M$, which descends to a smooth function on the Teichm\"uller space of $M$.  See \cite{EL, SampsonENS, Tromba} for proofs of smooth dependence in several contexts.

Suppose now that the target manifold $N$ satisfies the stronger condition that it has non-positive Hermitian sectional curvature.  This is the curvature condition introduced by Siu  \cite{Siu} and Sampson \cite{Sampson} in order to apply harmonic mappings to rigidity theory.  By definition, it means that $R(X,Y,\bar X,\bar Y)\le 0$ for all $X,Y\in TN\otimes\bC$, where $R$ denotes the complex multilinear extension of the Riemann curvature tensor of $N$.   In particular, by choosing $X$ and $Y$ real, we see that the sectional curvatures $R(X,Y,X,Y)\le 0$, so  $N$ has non-positive sectional curvature.  See, for example,  \cite{ABCKT} for an exposition of how this condition is can be used in K\"ahler geometry. 

 The purpose of this paper is, first of all, to prove that  under the non-positive Hermitian curvature condition, this energy function is a plurisubharmonic function on Teichm\"uller space.   We state this formally as:
 
 \begin{thm}
 \label{thm-energypsh}
 Let $M$ be a compact Riemann surface, let $N$ be a compact manifold of non-positive Hermitian sectional curvature.  Fix a homotopy class of maps from $M$ to $N$, and assume  that for every complex structure $J$ on $M$  there is a unique harmonic map $f_J:(M,J)\to N$ in this class.   Let $E$ be the function on the Teichm\"uller space of $M$ that assigns to the equivalence class of the complex structure $J$ the energy $E(f_J)$.  Then $E$ is plurisubharmonic.
 \end{thm}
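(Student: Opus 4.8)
The plan is to verify plurisubharmonicity one holomorphic disk at a time. Since $E$ is smooth (by the smooth-dependence results quoted in the introduction together with the uniqueness hypothesis), it suffices to show that for every holomorphic disk $g\colon\Delta\to\mathcal T(M)$ the composite $E\circ g$ is subharmonic, i.e.\ that the Levi form $(\partial\bar\partial E)(\dot g,\overline{\dot g})\ge 0$. Fix such a disk and realize it by a holomorphic family: a complex surface $\mathcal M$ with a proper holomorphic submersion $\pi\colon\mathcal M\to\Delta$ whose fibre over $t$ is $(M,J_t)$, together with a $C^\infty$ trivialization $\mathcal M\cong M\times\Delta$ over $\Delta$ --- concretely a family of Beltrami coefficients $\mu(t)$ on a fixed $(M,J_0)$ depending holomorphically on $t$, with $\mu(0)=0$ and $\dot\mu(0)=\dot g(0)$. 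By the cited smooth-dependence results the unique harmonic maps $f_t$ assemble into a $C^\infty$ map $F\colon\mathcal M\to N$ that is harmonic on each fibre. Note that $E(f_t)$ is, up to a fixed positive constant, the squared $L^2$-norm of the $J_t$-$(1,0)$ part $\partial^v F_t$ of $dF|_{\text{fibre}}$, a holomorphic section of $K_{\mathcal M/\Delta}\otimes F^*T^{\bC}N$ on each fibre, but \emph{not} a holomorphic section along $\Delta$; the failure of holomorphicity in the $t$-direction is exactly what will produce the positive Levi form.

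The first step of the computation is the first variation. Because each $f_t$ is a critical point of the energy among maps $(M,J_t)\to N$, the contribution to $dE$ coming from moving the map vanishes, so $dE$ is accounted for entirely by the variation of the conformal structure; in a local conformal coordinate this is the classical identity expressing $dE$, up to a positive constant, as the real part of the natural $\bC$-bilinear pairing of the Hopf differential $\Phi_t=\langle\partial f_t,\partial f_t\rangle$ --- a holomorphic quadratic differential on $(M,J_t)$ --- with the Beltrami class $\dot g(t)$. Thus along the disk $\partial E$ equals (a negative constant times) $\langle\Phi_t,\,\cdot\,\rangle$. Differentiating once more, and using that $t\mapsto\dot g(t)$ is holomorphic while the pairing is $\bC$-bilinear, one finds that $(\partial\bar\partial E)(\dot g,\overline{\dot g})$ is a positive multiple of $-\langle\,\bar\partial_t\Phi_t\,,\dot g(t)\,\rangle$, where $\bar\partial_t\Phi_t$ is the $(0,1)$-derivative of the section $t\mapsto\Phi_t$ of the bundle whose fibre over $t$ is $H^0((M,J_t),K_{J_t}^{\,2})$. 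Everything has thereby been reduced to computing $\bar\partial_t\Phi_t$, equivalently to the second variation of the harmonic maps.

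This is the heart of the proof, and it is a Bochner-type argument of the Siu--Sampson kind. Let $u=\partial F/\partial t|_{t=0}\in\Gamma(f_0^*T^{\bC}N)$ be the (complexified) variation field of the harmonic maps. Differentiating the harmonic map equation $\nabla''(\partial f_t)=0$ in $t$ shows that $u$ satisfies a Jacobi equation with source, $J_{f_0}u=(\text{a term built from }\dot g(0)\text{ and }\nabla\partial f_0)+(\text{a term tangent to }df_0)$, where $J_{f_0}$ is the Jacobi operator of $f_0$ --- non-negative because $N$ has non-positive sectional curvature and, by the uniqueness hypothesis, with trivial kernel --- and the last term reflects the non-uniqueness of the trivialization $\mathcal M\cong M\times\Delta$. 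Feeding this into the expression for $(\partial\bar\partial E)(\dot g,\overline{\dot g})$, integrating by parts over $M$, and rewriting the resulting second-fundamental-form terms in terms of $R^N$ via the Gauss equation, one obtains $(\partial\bar\partial E)(\dot g,\overline{\dot g})$ as an integral over $M$ of (i) manifestly non-negative terms --- squared pointwise norms of $\bar\partial$-type covariant derivatives of $u$ and of second fundamental forms, together with a term proportional to $|\dot g(0)|^2\,e(f_0)$ --- plus (ii) a curvature term of the shape $-\int_M R^N(X,Y,\bar X,\bar Y)\,dA$, with $X,Y\in T^{\bC}N$ built from $\partial f_0$ and $u$. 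By hypothesis $N$ has non-positive Hermitian sectional curvature, so (ii) is $\ge 0$; hence $(\partial\bar\partial E)(\dot g,\overline{\dot g})\ge 0$, and since $g$ was an arbitrary holomorphic disk, $E$ is plurisubharmonic.

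The main obstacle --- the technical core of the paper --- is precisely this last step: organizing the second variation so that, after the integration by parts, it is a sum of $L^2$-squares and a curvature term in the exact Siu--Sampson form $R^N(X,Y,\bar X,\bar Y)$. Three points demand care. First, the complex structure of the fibre varies with $t$, so ``$\partial f_t$'' is itself $t$-dependent and must be differentiated with that in mind. Second, the trivialization $\mathcal M\cong M\times\Delta$, hence the field $u$, is only defined up to adding $df_0$ of a vector field on $M$, and all such gauge terms must be shown to cancel in the final formula --- which they must, since $E$ lives on $\mathcal T(M)$ and is trivialization-independent. Third, the ``conformal'' source $\nabla\partial f_0$ coupled to $\dot g(0)$ must be shown, again via the harmonic map equation and integration by parts, to recombine with the curvature terms rather than contributing indefinitely. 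Non-positivity of sectional curvature alone is what makes $J_{f_0}$ non-negative; it is the stronger Hermitian condition that controls the indefinite terms and forces the final integrand to be non-negative, which is why the theorem needs it.
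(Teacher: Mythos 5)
Your overall architecture matches the paper's: restrict to a holomorphic disk of complex structures, compute $\Delta E$ via the second variation of energy, and aim for an expression of the form (non-negative terms) minus (a Hermitian sectional curvature integral $\int R(X,Y,\bar X,\bar Y)$ with $X,Y$ built from $d'f$ and the variation field). That curvature term is exactly what the paper extracts. But the step you yourself flag as ``the technical core'' is precisely the step you do not supply, and it is not a routine reorganization into squares. After the two differentiations the Laplacian takes the form
\begin{equation*}
\Delta E(0,0)=\int_M\Bigl(\langle d_\nabla \tfrac{\partial f}{\partial s}\wedge H\,df\rangle+\langle d_\nabla \tfrac{\partial f}{\partial t}\wedge J_0H\,df\rangle\Bigr)+\int_M\langle df\wedge J_0H^2\,df\rangle,
\end{equation*}
and the first integral is an \emph{indefinite} cross term coupling the Beltrami direction to the covariant derivative of the variation field; your claimed final shape, ``manifestly non-negative terms plus a curvature term,'' conceals the entire difficulty of dominating it. The paper's resolution has three specific ingredients, none of which appear in your sketch: (a) the two real cross terms must first be recognized as a single complex pairing $2\,\mathrm{Re}\,\langle d'_\nabla W\wedge J_0\,\mu\,d'f\rangle$ for the complexified variation field $W$ --- the paper notes explicitly that applying Cauchy--Schwarz to each real term separately is too weak; (b) Cauchy--Schwarz plus the AM--GM inequality then bounds this pairing by $\int\langle d'_\nabla\overline W\wedge J_0 d''_\nabla W\rangle+\tfrac12\int\langle df\wedge J_0H^2\,df\rangle$, consuming exactly half of the positive $H^2$ term; (c) the Micallef--Moore identity $I(W,\overline W)=2\int\langle d'_\nabla\overline W\wedge J_0 d''_\nabla W\rangle-4\int R(\partial f/\partial z,W,\overline{\partial f/\partial z},\overline W)\,dx\wedge dy$ converts the remaining $\bar\partial$-energy of $W$ back into the index form plus the Hermitian curvature integral. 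The resulting inequality $a\le b+4\rho$ has no slack when $\rho=0$, so a generic ``integrate by parts and collect squares'' argument cannot close the estimate; the precise complexification and the Micallef--Moore step are where the Hermitian (rather than merely sectional) curvature hypothesis actually enters, and they are missing.

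Two smaller points. Your appeal to ``the Gauss equation'' is misplaced: no isometric embedding or submanifold second fundamental form is in play; curvature enters through the commutation of $d'_\nabla$ and $d''_\nabla$ on $f^*TN^{\bC}$ inside the Micallef--Moore computation. The detour through $\bar\partial_t\Phi_t$ (the antiholomorphic $t$-derivative of the Hopf differential) is a legitimate alternative starting point, but you abandon it immediately for the direct second variation, so it does no work here. Finally, uniqueness of the harmonic map does not give the Jacobi operator a trivial kernel; fortunately only its non-negativity is needed.
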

 
 We remark that the uniqueness assumption is typically satisfied.   For instance, if the sectional curvature of $N$ is strictly negative, then the harmonic map is unique unless its image is either a point  or a closed geodesic \cite{Hartman}.  If $N$ is a locally symmetric space of non-compact type, then $f$ is unique unless $f_*(\pi_1(M))$ is centralized by a semi-simple element in the group of isometries of the universal cover of $N$  \cite{Sunada}.  These locally symmetric spaces all satisfy $K_\bC \le 0$ \cite{Sampson} and are the natural setting for our applications.

  We also give sufficient conditions for this function to be strictly plurisubharmonic, see Theorem \ref{thm-strict}.   Briefly, if the Hermitian sectional curvature of the target is strictly negative and we have a family of harmonic maps with non-equal images, then the function is stricly plurisubharmonic, see Corollary \ref{cor-strict}.

	  But the main applications that we have presently in mind  involve situations  where the function is not strictly plurisubhamonic.  It is natural to characterize the tangent directions to Teichm\"uller space in which the complex Hessian of $E$ vanishes.    We partially do this  in the case that the Hermitian curvature of the target is strictly negative, see Theorem \ref{thm-equality},   which suggests a possible characteriztion that would be  more complete and  geometric, see Remark \ref{rem-conjectureschiffer}
 
 A significant special case of the plurisubharmonicity result is due to Tromba \cite{Tromba}.   He proves that this function is strictly plurisubharmonic in case that $N$ is also a negatively curved Riemann surface and the homotopy class consists of homotopy equivalences.    We follow Tromba's method, but a new ingredient is needed.  For a surface $N$  sectional (=  Gaussian) curvature and Hermitian sectional curvature are equivalent, but they differ in higher dimensions.  The new ingredient that we need is the  Micallef-Moore formula for the Laplacian of a complex variation.  It is interesting that this formula seems to be the only other appearance of Hermitian curvature in the literature, and its applications have been mainly to positively curved situations.

It is clear that  Theorem~\ref{thm-energypsh} also holds in more general situations.  We have chosen $N$ to be a compact manifold for simplicity in quoting existence theorems, but it will be clear from the proofs that it holds whenever we have existence and uniqueness theorems.    In particular it holds under suitable technical conditions for non-compact $N$, and, more generally, for harmonic maps $f:\tilde M\to \tilde N$ equivariant under suitable representations $\rho:\pi_1(M)\to G$, where $G$ is the group of isometries of $\tilde N$, the universal cover of $N$, and $\tilde M$ is the universal cover of $M$. 
 
 The motivation for this paper is an idea of Gromov that plurisubharmonicity of energy should give an alternative formulation of the rigidity theory of Siu and Sampson.  Recall that this theory shows that harmonic maps from a compact K\"ahler manifold to a quotient $N$ of a bounded symmetric  domain $D$ are pluriharmonic, and that pluriharmonic maps of sufficiently large rank  (where sufficiently large is a function of $D$), are holomorphic.  
 Gromov's idea is best explained when the domain $V$ is a smooth projective algebraic surface, decomposed as a \lq\lq curve of curves", that is, suppose we have an algebraic map $V\to Q$ to some parameter  algebraic curve $Q$ with fibres $V_q$,  which are  smooth and  connected for $q\in  Q\setminus\Sigma$ for some finite set $\Sigma\subset Q$ (the set of critical values).   Let $f:V\to N$ be a harmonic, hence pluriharmonic map.  Theorem \ref{thm-energypsh} shows that  $E(f_q)$ is a subharmonic function on $Q\setminus\Sigma$.  
 To illustrate how this can be put to work, we discuss a special case of  results in \S 4.6 of  \cite{Gromov}.   Namely,  if  the $V_q$ form a Lefschetz pencil and if one $f_q$ is holomorphic,  then $f$ is holomorphic.
  
 This paper grew out of conversations with Misha Gromov.   I am very grateful to him for suggesting that this result should be true, for many interesting conversations, and for his encouragement to write up this paper.    I am also grateful to Arnaud Beauville for some helpful suggestions, and to the referrees for suggestions that greatly improved the presentation of the paper/

\section{Plurisubharmonicity}

Let $M$ be a closed oriented surface and $J$ a compatible complex structure on $M$.  If $\alpha$ is a one-form on $M$, we write $J\alpha$ for the one form $\alpha\circ J^{-1} = -\alpha\circ J$.  More generally, if $A$ is an endomorphism of $TM$, we will write $A\alpha$ for $-\alpha\circ A$.  Then  $J\alpha = *\alpha$ where $*$ denotes the Hodge $*$-operator of any Riemannian metric in the conformal class of $J$.  In particular, $\int_M \alpha\wedge J\alpha = \int_M\alpha\wedge *\alpha$ is the square of the  $L^2$-norm of $\alpha$.  If $f:M\to N$ is a smooth map, let $<df\wedge Jdf>$ denote the two-form on $M$ obtained by combining the wedge product in $M$ with the Riemannian metric $<\ ,\ >$ on $f^*TN$.  Then 
\begin{equation}
\label{twovariableenergy}
\mathcal{E}(f,J) = \frac{1}{2} \int_M<df\wedge Jdf>
\end{equation}
is the energy of $f$, which is a function of $f$ and $J$.   In other words, if 
 $\mathcal{C}\subset C^\infty (M,End(TM))$ denotes the space of complex structures on $M$:
\begin{equation}
\label{spacecomplexstructures}
\mathcal{C} = \{J\in C^\infty(M,End(TM)): J^2 = -id\},
\end{equation}
and $C^\infty (M,N)$ denotes the space of $C^\infty$ maps from $M$ to $N$, then the energy is a function $\mathcal{E}:C^\infty(M,N)\times\mathcal{C}\to \bR$.

Now fix a compact manifold $N$ of non-positive Hermitian sectional curvature (in particular, of non-positive sectional curvataure), and fix homotopy class of maps from $M$ to $N$, that is, fix a connected component of $C^\infty(M,N)$.  For every complex structure $J\in\mathcal{C}$ there exists a harmonic map $M\to N$ in this component.  Under situations in which this map is unique it depends smoothly on $J$ and we can denote this harmonic map by $f_J$ and we let $E(J) = \mathcal{E}(f_J,J)$.  Then $E:\mathcal{C}\to \bR$ is a function that descends to Teichm\"uller space.  To prove Theorem \ref{thm-energypsh}, that $E$ is plurisubharmonic on Teichm\"uller space, is equivalent to proving that it is plurisubharmonic on $\mathcal{C}$ which means that its restriction to every germ of a complex curve in $\mathcal{C}$ is subharmonic.   This last statement can be checked in the following way.

Let $D$ be a small disk in $\bC$ centered at $0$, and let $J(s,t)$ be a family of complex structures on $M$ compatible with the orientation and depending holomorphically on the complex parameter $u = s+it\in D$.  Explicitly, this means that for each $u = s+it$, $J(s,t)\in \mathcal{C}$ satisfying the Cauchy-Riemann equations 
\begin{equation}
\label{holomorphicfamily}
J(s,t)^2 = - id \ \ \hbox{and} \ \ \frac{\partial J}{\partial t} = J \frac{\partial J}{\partial s}.
\end{equation}
Let $E(s,t) = \mathcal{E}(f(s,t),J(s,t))$, where we write $f(s,t)$ for  $f_{J(s,t)}$  Then $E$ is plurisubharmonic on $\mathcal{C}$ if and only of, for all such disks $D$ and families $J(s,t)$, we have $\Delta E(0,0) \ge 0$.  This is what we prove:

\begin{thm}
\label{thm-psh}
Let $E:D\to \mathbb{R}$ be the function just defined.  Then $\Delta E(0,0)\ge 0$.
\end{thm}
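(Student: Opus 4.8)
The plan is to follow Tromba's strategy for surfaces but replace the Gaussian-curvature term with the Hermitian-curvature estimate coming from the Micallef–Moore formula. First I would set up the variational framework: fix the holomorphic family $J(s,t)$ satisfying \eqref{holomorphicfamily} and write $E(s,t)=\mathcal{E}(f(s,t),J(s,t))$. Since $f(s,t)$ is the harmonic map for $J(s,t)$, the first-order variation of $\mathcal{E}$ in the $f$-direction vanishes, so $\frac{\partial E}{\partial s}$ and $\frac{\partial E}{\partial t}$ only see the explicit $J$-dependence. Differentiating again, $\Delta E(0,0)=\big(\partial_s^2+\partial_t^2\big)\mathcal{E}(f(s,t),J(s,t))$ will split into (i) the pure $J$-Hessian of $\mathcal{E}$, which by \eqref{twovariableenergy} is $\frac12\int_M\langle df\wedge (\partial_s^2+\partial_t^2)J\,df\rangle$ and can be computed from \eqref{holomorphicfamily} (the Cauchy–Riemann equations force $(\partial_s^2+\partial_t^2)J$ to be expressible through $\partial_s J$ and $J$), plus (ii) cross terms $2\,\partial_s\partial_s$ and $2\,\partial_t\partial_t$ mixing the variation of $f$ with the variation of $J$, and (iii) a term involving the second variation of $f$ alone, which is governed by the Jacobi operator (the Hessian of energy at a harmonic map), hence is nonnegative by non-positivity of the sectional curvature of $N$.

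The key computation is to show that the remaining (indefinite-looking) terms reorganize into a manifestly nonnegative expression. Here is where I would introduce the complex variation field: let $V=\frac{\partial f}{\partial s}+i\frac{\partial f}{\partial t}$ be the section of $f^*TN\otimes\bC$ recording the first-order motion of the harmonic maps, and let $\partial f$ denote the $(1,0)$-part of $df$ with respect to $J(0,0)$. The Cauchy–Riemann condition on $J(s,t)$ translates into the statement that $V$ satisfies a $\bar\partial$-type equation driven by the Beltrami differential $\mu=\partial J/\partial s$ of the deformation, i.e. a first-order elliptic equation of the form $\bar\partial V = (\text{curvature-free transport of }\mu\cdot\partial f)$. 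I would then invoke the Micallef–Moore formula for $\Delta\langle V,\bar V\rangle$ (equivalently, for the Laplacian of the energy density of the complex variation): it expresses $\int_M\Delta\langle V,\bar V\rangle$ as a sum of a manifestly nonnegative $\|\nabla^{0,1}V\|^2$-type term, the curvature term $-\int_M R(V,\partial f,\bar V,\overline{\partial f})$ (which is $\ge 0$ precisely because $K_\bC\le 0$ — this is the one place the Hermitian hypothesis, not merely $K_{\mathrm{sec}}\le 0$, is used, since $V$ and $\partial f$ are genuinely complex), and boundary/harmonicity terms that vanish because $M$ is closed and $f$ is harmonic. Matching this against the expansion of $\Delta E$ from the previous step shows $\Delta E(0,0)\ge 0$.

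The main obstacle I expect is step two: correctly bookkeeping the three families of second-derivative terms and checking that the cross terms (variation of $f$ against variation of $J$) are exactly the ones absorbed by the Micallef–Moore identity, with no leftover indefinite piece. Concretely, one must verify that the Euler–Lagrange equation for $f_J$, differentiated in $J$, produces precisely the $\bar\partial$-equation for $V$ that the Micallef–Moore formula wants as input, and that the Beltrami differential $\mu$ couples to $\partial f$ in the way the curvature term requires. A secondary technical point is the smooth dependence of $f_J$ on $J$ (so that $V$, and the second variations, exist and are smooth), but this is granted by the uniqueness hypothesis and the cited references \cite{EL, SampsonENS, Tromba}. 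Once the identification is made, positivity is automatic from $K_\bC\le 0$ together with the obvious nonnegativity of the $L^2$-norm terms, and the equality case (for the later Theorems \ref{thm-strict} and \ref{thm-equality}) will be read off from which of these nonnegative terms can vanish.
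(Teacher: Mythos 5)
Your overall strategy (Tromba's two--variable variational setup plus the Micallef--Moore formula to import the Hermitian curvature hypothesis) is the right one and matches the paper's. But there are two genuine gaps. First, the role of the Jacobi/index form is inverted. Because $f_J$ is a critical point of $\mathcal{E}(\cdot,J)$ and $\mathcal{E}$ is linear in $J$, your pieces (ii) and (iii) are not independent: differentiating the criticality condition in $s$ shows that the pure second variation of $f$ (the index form) equals minus the cross term, and the net contribution of the two is $-I(W,\overline W)\le 0$, where $I$ is the index form and $W$ the complex variation field. So the positive semidefiniteness of the Jacobi operator (from $K\le 0$) is the \emph{obstacle} to plurisubharmonicity, not a favorable term; the whole content of the proof is to show that $I(W,\overline W)$ is dominated by the manifestly nonnegative $\Delta J$ term $b=\int_M\langle df\wedge J_0H^2\,df\rangle$.

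Second, the variation field does not satisfy a first-order equation $d_\nabla''W=\mu\,d'f$: differentiating the harmonic map equation in $J$ yields only the second-order Jacobi equation with source $d_\nabla(\dot J\,df)$, so there is no $\bar\partial$-equation to feed into Micallef--Moore, and your plan of ``matching'' the identity against the expansion of $\Delta E$ does not close. The missing step is an inequality, not an identity: rewrite the cross term as $2\,\mathrm{Re}\int_M\langle d_\nabla'W\wedge J_0\,\mu\,d'f\rangle$ and apply Cauchy--Schwarz together with the arithmetic--geometric mean inequality to bound it by $\int_M\langle d_\nabla'\overline W\wedge J_0 d_\nabla''W\rangle+\int_M\langle\mu d'f\wedge J_0\bar\mu d''f\rangle$. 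The second summand equals $b/2$, and the Micallef--Moore identity converts the first summand into $\tfrac12 I(W,\overline W)$ plus a Hermitian sectional curvature integral $2\rho$ with $\rho\le 0$. Writing $a=I(W,\overline W)$ this gives $a\le\tfrac12(a+b)+2\rho$, hence $a\le b+4\rho\le b$, which is the theorem. The relation $d_\nabla''W=\pm\mu\,d'f$ that you posited as an identity appears only as the equality case of this Cauchy--Schwarz step, and is what drives the later rigidity statements.
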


 An alternative and useful formulation of such a family $J(s,t)$ is  the following.  
 Give $M\times D$  the complex structure $\tilde J$ which is the direct sum of  $J(s,t)$ on $M\times (s,t)$ and the standard complex structure on $x\times D$ for each $x\in M$.  Explicitly,  $\tilde J (X + Y ) = JX + I Y$, where $X\in TM$, $Y\in TD$, $JX = J(s,t)X$ and $IY$ is the standard complex structure on $D$, that is, multiplication by $i$ on $TD$.   It is straightforward to check, since  $J$ a holomorphic function of $s+it$ (and since $J$ is integrable), that $\tilde J$ is an integrable almost complex structure on $M\times D$.
 
 The harmonic maps $f(s,t):(M,J(s,t))\to N$ can be assembled into a single map  $f:M\times D\to N$ which we think of as a variationf the  map $f|_{M\times\{0\}} : M\to N$.  We will perform computations on $M\times D$ using the following notation.   We will write simply $TM$ for the bundle $pr_M^*(TM)$ over $M\times D$, and $df$ for the section of $T^*M\otimes f^*TN$ which is the differential of $f$ restricted to directions tangent to $M$.  Then the energy function $E:D\to \bR$  under consideration  
\begin{equation}
\label{energy}
E(s,t) = \frac{1}{2} \int_M<df\wedge Jdf>.
\end{equation}
Recall that, for each $(s,t)\in D$, $f(\ \ ,(s,t))$ is harmonic with respect to $J(s,t)$:
\begin{equation}
\label{harmonic}
d_\nabla Jdf = 0
\end{equation}
where $d_\nabla :C^\infty ( T^*M\otimes f^*TN)\to C^\infty(\bigwedge^2 T^*M\otimes f^*TN)$ denotes the exterior derivative associated to the connection $\nabla$ on $f^*TN$ induced by $f$ from the Levi-Civita connection on $N$.

\begin{proof}[Proof of Theorem~\ref{thm-psh}]

We derive a formula for  $\frac{\partial^2 E}{\partial s^2}$ (similarly for $\frac{\partial^2 E}{\partial t^2}$) by using the following general principle: 

\begin{lem}
\label{lem-secondderivative}
 Suppose $g = g(x,y)$ is a function of two variables that is linear in $y$,  and  that  $(x(s),y(s))$ is a curve in the domain of $g$ so that for each $s$,  $x(s)$ is a critical point of the function $g(x,y(s))$, that is, $\frac{\partial g}{\partial x}(x(s),y(s)) = 0$.   Define a function $\phi$ by $\phi(s) = g(x(s),y(s))$.  Then 
 \begin{eqnarray}
  \label{secondone}
 \frac{\partial^2 \phi}{\partial s^2}  = \frac{\partial^2 g}{\partial x \partial y}\frac{ \partial x}{\partial s}\frac{ \partial y}{\partial s} + \frac{\partial g}{\partial y} \frac{\partial^2 y}{\partial s^2}\\
 \label{secondtwo}
  = -\frac{\partial^2 g}{\partial x^2}(\frac{\partial x}{\partial s})^2 + \frac{\partial g}{\partial y} \frac{\partial^2 y}{\partial s^2},
 \end{eqnarray}
 where the derivatives of $g$ are evaluated at $(x(s),y(s))$.
  \end{lem}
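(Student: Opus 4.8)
The plan is a pure chain-rule computation in which the two hypotheses are used in turn: \emph{linearity in $y$}, which forces $\partial^2 g/\partial y^2\equiv 0$, and \emph{criticality in $x$}, which says that $\partial g/\partial x$ vanishes identically along the curve $s\mapsto(x(s),y(s))$. This is the envelope-theorem mechanism carried to second order.

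First I would differentiate $\phi(s)=g(x(s),y(s))$ once, getting
\[
\phi'(s)=\frac{\partial g}{\partial x}\,\frac{\partial x}{\partial s}+\frac{\partial g}{\partial y}\,\frac{\partial y}{\partial s}
\]
with the partials of $g$ evaluated at $(x(s),y(s))$, and then discard the first term by the criticality hypothesis $\frac{\partial g}{\partial x}(x(s),y(s))=0$, leaving $\phi'(s)=\frac{\partial g}{\partial y}\,\frac{\partial y}{\partial s}$.

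Next I would differentiate once more by the product rule. Writing
\[
\frac{d}{ds}\Big[\frac{\partial g}{\partial y}(x(s),y(s))\Big]=\frac{\partial^2 g}{\partial x\partial y}\frac{\partial x}{\partial s}+\frac{\partial^2 g}{\partial y^2}\frac{\partial y}{\partial s}
\]
and deleting the last term by linearity of $g$ in $y$ yields precisely \eqref{secondone}. To reach \eqref{secondtwo} I would differentiate the \emph{identity} $\frac{\partial g}{\partial x}(x(s),y(s))\equiv 0$ in $s$, which gives $\frac{\partial^2 g}{\partial x^2}\frac{\partial x}{\partial s}+\frac{\partial^2 g}{\partial x\partial y}\frac{\partial y}{\partial s}=0$, hence $\frac{\partial^2 g}{\partial x\partial y}\frac{\partial y}{\partial s}=-\frac{\partial^2 g}{\partial x^2}\frac{\partial x}{\partial s}$; substituting this into the cross term of \eqref{secondone} gives \eqref{secondtwo}.

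There is no genuine obstacle here — the lemma is formal — and the only care needed is bookkeeping: the criticality hypothesis gets used \emph{twice}, once directly to drop $\frac{\partial g}{\partial x}\frac{\partial x}{\partial s}$ from $\phi'$ and once in differentiated form to rewrite the cross term, while linearity in $y$ is exactly what makes $\frac{\partial^2 g}{\partial y^2}$ drop out. One may also observe that ``linear in $y$'' is slightly stronger than needed: ``affine in $y$'', i.e.\ $\partial^2 g/\partial y^2\equiv 0$, suffices, and this is the form in which the lemma is applied, with $x$ the map variable (critical because $f(s,t)$ is harmonic) and $y$ the complex-structure variation, in an infinite-dimensional setting where nothing formal changes.
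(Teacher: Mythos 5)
Your proof is correct and follows exactly the route the paper takes: formula (\ref{secondone}) by direct differentiation (using criticality to kill $\frac{\partial g}{\partial x}\frac{\partial x}{\partial s}$ and linearity in $y$ to kill the $\frac{\partial^2 g}{\partial y^2}$ term), and formula (\ref{secondtwo}) by differentiating the identity $\frac{\partial g}{\partial x}(x(s),y(s))=0$ along the curve and substituting into the cross term. The paper compresses all of this into one sentence, so your version is just a fully written-out account of the same argument.
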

  \begin{proof}
  The first formula is clear, the second follows from the first by differentiating  $\frac{\partial g}{\partial x}(x(s),y(s)) = 0$.
 \end{proof}

Applying this lemma to our function $E(J) = \mathcal{E}(f_J,J)$,  Equation~\ref{secondone} gives
\begin{equation}
\label{secondderivativeone}
\frac{\partial^2 E}{\partial s^2} = \int_M ( <d_\nabla\frac{\partial f}{\partial s}  \wedge \frac{\partial J}{\partial s} df> + \frac{1}{2}  <df\wedge (\frac{\partial^2 J}{\partial s^2} df)>),
\end{equation}
(where the first term is obtained by differentiating (\ref{twovariableenergy}) separately in each variable: in direction $f$ keeping $J$ fixed (first variation of energy, before doing an integration by parts) and in direction $J$ keeping $f$ fixed), 
while Equation~(\ref{secondtwo}) gives 
\begin{equation}
\label{secondderivativetwo}
\frac{\partial^2 E}{\partial s^2} = -I(\frac{\partial f}{\partial s},\frac{\partial f}{\partial s} )+     \frac{1}{2} \int_M  <df\wedge (\frac{\partial^2 J}{\partial s^2} df)>,
\end{equation}
where $I(\frac{\partial f}{\partial s},\frac{\partial f}{\partial s} )$ is the {\em Index Form} or {\em Second Variation Form} of the energy $\mathcal{E}$ (with respect to its first variable, for fixed $J$), that is 

\begin{eqnarray}
\label{indexform}
I(\frac{\partial f}{\partial s},\frac{\partial f}{\partial s}) & = &  - \int_M<\frac{\partial f}{\partial s},d_\nabla(Jd_\nabla \frac{\partial f}{\partial s}) + \hat R(\frac{\partial f}{\partial s})> \\
{} & = &  \int_M(<d_\nabla \frac{\partial f}{\partial s}\wedge J d_\nabla \frac{\partial f}{\partial s}> - <\frac{\partial f}{\partial s},\hat R(\frac{\partial f}{\partial s})>) \nonumber.
\end{eqnarray} 

where $\hat R(\frac{\partial f}{\partial s})$ is the $f^*TN$-valued two form on $M$ defined by
\begin{equation}
\label{curvatureform}
\hat R(\frac{\partial f}{\partial s})(X\wedge Y) = R(\frac{\partial f}{\partial s},df(X))df(JY)  -  R(\frac{\partial f}{\partial s},df(Y))df(JX)
\end{equation}
for all $X,Y\in T_xM$. 
Observe that 
\begin{eqnarray}
\label{sectionalcurvature}
<\hat R(\frac{\partial f}{\partial s})(X\wedge JX),\frac{\partial f}{\partial s}> & = & R(\frac{\partial f}{\partial s},df (X), \frac{\partial f}{\partial s},df( X))  \\
{} & + &  R(\frac{\partial f}{\partial s},df( JX),\frac{\partial f}{\partial s}, df( JX)), \nonumber
\end{eqnarray}
where $R(X,Y,Z,W) = <R(X,Y)Z,W>$ is the curvature tensor of $N$.  In particular, it  has the same  sign as sectional curvature. The curvature assumption on $N$ implies  that $I$ is a positive (semi-) definite symmetric bilinear form.

Going back to formula (\ref{secondderivativeone}) and adding to it the same formula for the second derivative with respect to $t$, we  obtain the following formula for the Laplacian $\Delta = \frac{\partial^2}{\partial s^2} + \frac{\partial^2}{\partial t^2}$:
\begin{equation}
\label{laplacianone}
\Delta E = \int_M( <d_\nabla\frac{\partial f}{\partial s}  \wedge \frac{\partial J}{\partial s} df> +  <d_\nabla\frac{\partial f}{\partial t}  \wedge \frac{\partial J}{\partial t} df> + \frac{1}{2} <df\wedge( \Delta J )df>).
\end{equation}

Doing the same with formula~(\ref{secondderivativetwo}) we obtain
\begin{equation}
\label{laplacianrealindex}
\Delta E = - I(\frac{\partial f}{\partial s},\frac{\partial f}{\partial s}) - I(\frac{\partial f}{\partial t},\frac{\partial f}{\partial t}) +  \frac{1}{2}\int_M  <df\wedge( \Delta J )df>.
\end{equation}
Writing
\begin{equation}
\label{definitionw}
W = \frac{\partial f}{\partial t} + i \frac{\partial f}{\partial s},
\end{equation}
(the reason for the choice will be clear later; note that  $W = 2 i \frac{\partial f}{\partial u}$, where $u = s+it$), 
 the first two terms of  formula~(\ref{laplacianrealindex})  are the same as $-I(W,\overline W) = -I(\overline W,W)$ where, for complex vector fields $U,V$, $I(U,V)$ denotes the complex bilinear extension of $I$.  Thus an equivalent form of (\ref{laplacianrealindex}) is 
\begin{equation}
\label{laplaciancomplexindex}
\Delta E = -I(W,\overline W) +  \frac{1}{2}\int_M  <df\wedge( \Delta J )df> = - a + b,
\end{equation}
where $-a$ and $b$ are the first and second terms, respectively.   

Note that, differentiating  the second equation (\ref{holomorphicfamily}) with respec to $s$ and to $t$ and combining some terms we get  
\begin{equation}
\label{laplacianofJ}
\Delta J = J\  (\ \frac{\partial J}{\partial s}\ )^2.  
\end{equation}
It is easy to see that  $( \frac{\partial J}{\partial s})^2$ is a non-negative multiple of the identity, thus $b\ge 0$.
But  the non-positive curvature assumption forces, as remarked above, $a\ge 0$.  So proving the positivity of $b-a$ will require some work.  We will need both expressions (\ref{laplacianone}) and (\ref{laplaciancomplexindex}):  a suitable combination of an upper bound for the first two terms of (\ref{laplacianone}) and a lower bound for $I(W,\overline W)$ in (\ref{laplaciancomplexindex})  will give the desired inequality.

To simplify the notation define $J_0$ and $H$ to be:
\begin{equation}
\label{originofdisk}
J_0 = J(0,0),\ \ \ H = \frac{\partial J}{\partial s}(0,0). 
\end{equation}
Then the equations (\ref{holomorphicfamily}) and (\ref{laplacianofJ}) give

\begin{equation}
\label{derivatives}
 \ \ \frac{\partial J}{\partial t}(0,0) = J_0 H, \ \ \ (\Delta J )(0,0) = J_0H^2,
 \end{equation}
therefore the formulas  (\ref{laplacianone}) and (\ref{laplaciancomplexindex}) become:
\begin{equation}
\label{laplaciantwo}
\Delta E(0,0) = \int_M(<d_\nabla\frac{\partial f}{\partial s} \wedge H df> + <d_\nabla\frac{\partial f}{\partial t} \wedge J_0  H df> +  <df\wedge J_0H^2 df>),
\end{equation}
and 
\begin{equation}
\label{laplaciancomplexindextwo}
\Delta E (0,0) = - I(W,\overline W) + \int_M <df\wedge J_0H^2 df> = -a + b,
\end{equation}
where, as before, $-a$ and $b$ are the first and second terms, and $-a$ is also the sum of the first two terms of (\ref{laplaciantwo}).    Explicitly,
\begin{eqnarray}
\label{twoterms}
a &  =  & - \ \int_M(<d_\nabla\frac{\partial f}{\partial s} \wedge H df>  + <d_\nabla\frac{\partial f}{\partial t} \wedge J_0  H df>) \ \  = \ \  I( W,\overline W) \nonumber \\  
 b&= &\int_M <df \wedge J_0 H^2 df>,
\end{eqnarray} 
  where the first expression for  $a$ is minus  the sum of the first two terms of (\ref{laplaciantwo}), and the second expression for $a$  is from (\ref{laplaciancomplexindex}).
 
To prove that $\Delta E(0,0)\ge 0$ is the same as proving $a\le b$ which is the same as  
\begin{equation}
\label{inequality}
|\int_M (<d_\nabla\frac{\partial f}{\partial s} \wedge H df> + <d_\nabla\frac{\partial f}{\partial t} \wedge J_0  H df> )| \le \int_M <df\wedge J_0H^2 df>.
\end{equation}
The most straightforward estimate, obtained by applying Schwarz's inequality to each term on the left hand side, is too weak, because it treats the two terms on the left as separate entities.   To derive an efficient estimate we need to recognize this sum as a single inner product of two complex tensors.   

Decompose, as usual, the complexified tangent space $TM^\bC = T_{1,0}M\oplus T_{0,1}M$ and the complexified cotangent space $T^*M^\bC = T^{1,0}M\oplus T^{0,1}M$ into $\pm i$ eigenspaces for $J_0$.  Write $d'f$ for the restriction of  the complexified differential $df:TM^\bC\to f^*TN^\bC$ to $T_{1,0}M$ and $d''f$ for its restriction to $T_{0,1}M$.  Thus $d'f$ is a section of $T^{1,0}M\otimes f^*TN^\bC$ and $d''f$ is a section of $T^{0,1}M\otimes f^*TN^\bC$.  The complexified covariant derivative $d_\nabla$ splits as a sum $d_\nabla = d_\nabla' + d_\nabla''$.  In this notation, the harmonic equation for $f$ at $M,J_0$ is $d_\nabla'' d'f = 0$.

The complexification of the endomorphism $H$ of $TM$ anti-commuting with $J_0$ is of the form  $H = \mu + \bar\mu$, where $\mu$ is a section of $T^{0,1}M\otimes T_{1,0}M$ (and consequently $\bar\mu$ a section of $T^{1,0}M\otimes T_{0,1} M$).  Thus, in terms of a local complex coordinate $z = x+iy$ for $M$ with complex structure $J_0$.  
\begin{equation}
\label{beltrami}
H =m \frac{\partial}{\partial z}\otimes d\bar z + \bar m \frac{\partial}{\partial \bar z} \otimes dz,
\end{equation}
where $m$ is a smooth complex-valued function.   
Consequently we also have $J_0 H = i\mu - i \bar\mu$.  In this notation, the integrand in the first term of (\ref{inequality}) is
\begin{displaymath}
 <(d_\nabla' + d_\nabla'')\frac{\partial f}{\partial s}\wedge (\mu d'f + \bar\mu d''f)> ,
  \end{displaymath}
where, in accordance with our earlier notation, $\mu d'f = -d'f\circ \mu$ and $\bar\mu d''f = - d''f\circ \bar\mu$.  Note that $\mu d'f$ is a form of type $(0,1)$, given by $- m\frac{\partial f}{\partial z} d\bar z$ in local coordinates, while $\bar\mu d''f$ is of type $(1,0)$, and locally given by $ - \bar m \frac{\partial f}{\partial \bar z} dz$.  The bracket $<\ ,\ >$ denotes the complex bilinear extension of the inner product.   Therefore the Hermitian products will involve complex conjugation.

Expanding the above expression, we get the expression 
\begin{displaymath}
<d_\nabla\frac{\partial f}{\partial s}\wedge H df> =  <d_\nabla'\frac{\partial f}{\partial s}\wedge \mu d'f> + <d_\nabla''\frac{\partial f}{\partial s}\wedge\bar\mu d''f>.
\end{displaymath}
Similarly, the second term is
\begin{displaymath}
<d_\nabla\frac{\partial f}{\partial t}\wedge J_0 H df> =  <d_\nabla'\frac{\partial f}{\partial t}\wedge i \mu d'f> + <d_\nabla''\frac{\partial f}{\partial t}\wedge (-i)\bar\mu d''f>,
\end{displaymath}
thus their sum is
\begin{displaymath}
<d_\nabla '( \frac{\partial f}{\partial s} + i \frac{\partial f}{\partial t}) \wedge \mu d'f> + <d_\nabla'' (\frac{\partial f}{\partial s} - i \frac{\partial f}{\partial t}) \wedge\bar\mu d''f>
\end{displaymath}

which can be rewritten as 
\begin{displaymath}
<d_\nabla ' W \wedge \mu d'f> + <d_\nabla'' \bar W \wedge \bar\mu d''f> = 2 Re <d_\nabla' W\wedge J_0 \mu d'f>,
\end{displaymath}
where $W = \frac{\partial f}{\partial s} + i \frac{\partial f}{\partial t}$ as in (\ref{definitionw}).     We summarize:
\begin{equation}
\label{complexequation}
<d_\nabla\frac{\partial f}{\partial s}\wedge  H df>  + <d_\nabla\frac{\partial f}{\partial t}\wedge J_0 H df>  =  2 Re <d_\nabla' W\wedge J_0 \mu d'f>.
\end{equation}

Note that the hermitian form $\int_M <\alpha\wedge J_0 \bar \beta>$ is positive definite, and therefore so is the real form $\int_M  Re <\alpha\wedge J_0 \bar \beta>$.
Applying the Schwarz inequality to this bilinear form,  and then applying the inequality of arithmetic and geometric means gives the inequality
\begin{equation}
\label{schwarz}
|2 \int_M Re <d_\nabla' \overline W\wedge J_0\mu d'f>|\le \int_M <d_\nabla' \overline W\wedge J_0 d_\nabla''  W> + \int_M <\mu d'f\wedge J_0 \bar\mu d''f>,
\end{equation}
with equality if and only if $d''_\nabla  W$ and $\mu d'f$ are linearly dependent over $\bR$ (for the Schwarz inequality) and they have the same length (for equality of  the arithmetic and geometric means), thus if and only if 
\begin{equation}
\label{parallelvectors}
d_\nabla''  W = \pm \mu d'f,
\end{equation} 
One easily checks, say using the local coordinate expression (\ref{beltrami}), that  the second term on the right-hand side of (\ref{schwarz}) is
\begin{equation}
\label{secondterm}
 \int_M <\mu d'f\wedge J_0 \bar\mu d''f> = \frac{1}{2} \int_M < df\wedge J_o H^2 df>,
 \end{equation}
 which is  one of the terms we want in our desired inequality (\ref{laplaciantwo}).   Thus we turn to the first term on the right hand side of (\ref{schwarz}).    First observe that 
\begin{displaymath}
\int_M <d_\nabla ' \overline W\wedge J_0 d_\nabla ''  W>  =  \int_M <d_\nabla ''  W \wedge J_0 d_\nabla ' \overline  W>  =  2 \int_M |\nabla_{\bar z}  W|^2 dx\wedge dy, 
\end{displaymath}
the factor of $2$ resulting from $i d z\wedge \bar dz = 2 dx\wedge dy$. 
Compare now with the basic formula (2.3) of Micallef and Moore \cite{MM}, (which, by the appearance of this factor of $2$,  requires changing their coefficient of  $4$ in the first term to a $2$).   In our notation it reads:
\begin{equation}
\label{mm}
I( W, \overline W) = 2 \int_M <d_\nabla ' \overline W\wedge J_0 d_\nabla ''  W>  -\  4  \int_M R(\frac{\partial f}{\partial z},  W, \overline {\frac{\partial  f}{\partial  z}}, \overline W) dx\wedge dy.
\end{equation}
Observe that the integrand in the second term is Hermitian sectional curvature.   Thus this curvature makes its appearance through the Micallef-Moore formula.

 We now have all the ingredients that we need.  In order to have reasonable formulas to display, let us use the following notation
 \begin{eqnarray*}
a &  =  & - \ \int_M(<d_\nabla\frac{\partial f}{\partial s} \wedge H df>  + <d_\nabla\frac{\partial f}{\partial t} \wedge J_0  H df>) \ \  = \ \  I( W,\overline W) \nonumber \\  
 \alpha & = & \int_M <d_\nabla ' \overline W\wedge J_0 d_\nabla ''  W>  \\
 b&= &\int_M <df \wedge J_0 H^2 df> \\
 \rho & = &   \int_M R(\frac{\partial f}{\partial z},  W, \overline{\frac{\partial f}{\partial  z}}, \overline W ) dx\wedge dy,  \end{eqnarray*}
where $a$ and $b$ are as defined before in (\ref{twoterms}).    Note that the first three of these  quantities are non-negative, while the fourth, being Hermitian sectional curvature, is, by assumption, non-positive.    

The inequality (\ref{inequality}) that we want to prove is $a \le b$.  The inequality (\ref{schwarz}) proves that $a \le \alpha + \frac{b}{2}$.  The Micallef-Moore formula (\ref{mm})  and the equality $a = I( W,\overline W)$  show that $\alpha = \frac{a }{2}+ 2 \rho$.  Thus 
 \begin{displaymath}
 a  \le \frac{1}{2} (a  + b) + 2\rho
 \end{displaymath}
 equivalently,
 \begin{equation}
 \label{finalinequality}
a \le b + 4\rho
 \end{equation}
 which implies the desired inequality $a \le b$  since $\rho \le 0$ by the assumption that $N$ has non-positive Hermitian curvature.
 \end{proof}
  
 \section{Strict Plurisubharmonicity}
 
 Now we look at situations when the inequality (\ref{inequality}) must be strict.  One such situation is as follows.
 \begin{thm}
 \label{thm-strict}
 Suppose that  $R(X,Y,\bar X,\bar Y)<0$ whenever $X\wedge Y\ne 0$, in other words, $N$ has strictly negative Hermitian curvature.  Suppose that  $df$ is never zero, and suppose that the complex structure varies to first order, in other words, the class of $\mu$ is non-zero in $H^1(M,T_{1,0}M)$.   Equivalently, suppose $\mu$ represents a non-zero tangent vector to Teichm\"uller space.  Then $\Delta E (0) > 0$ for the variation (\ref{originofdisk}).
 \end{thm}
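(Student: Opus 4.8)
The plan is to trace through the chain of inequalities established in the proof of Theorem~\ref{thm-psh} and determine exactly when each can be an equality, then show that under the hypotheses of Theorem~\ref{thm-strict} at least one must be strict. Recall that we proved $a \le \alpha + \tfrac{b}{2}$ (the combined Schwarz/AM--GM inequality \eqref{schwarz}) and $\alpha = \tfrac{a}{2} + 2\rho$ (Micallef--Moore), giving $a \le b + 4\rho \le b$, where the last step uses $\rho \le 0$. So $\Delta E(0) = b - a \ge -4\rho \ge 0$, and for $\Delta E(0) = 0$ we would need both $\rho = 0$ and equality in \eqref{schwarz}.

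First I would handle the curvature term. Since $N$ has \emph{strictly} negative Hermitian curvature, $\rho = \int_M R(\tfrac{\partial f}{\partial z}, W, \overline{\tfrac{\partial f}{\partial z}}, \overline W)\, dx\wedge dy$ vanishes only if the integrand vanishes pointwise, which (by strict negativity) forces $\tfrac{\partial f}{\partial z} \wedge W = 0$ at every point, i.e. $W$ is pointwise a complex multiple of $\partial f/\partial z = \tfrac12 d'f(\partial/\partial z)$. In particular, wherever $d'f \ne 0$ we may write $W = \lambda\, d'f(\partial/\partial z)$ for a function $\lambda$ (not a priori continuous across zeros of $d'f$, but $df$ is assumed nowhere zero, so this is fine — though one must be careful that $d'f$ itself could still vanish at isolated points even when $df \ne 0$; I would address this by noting $d'f$ has only isolated zeros since $f$ is harmonic, hence $d'f$ is holomorphic as a section of $T^{1,0}M \otimes f^*TN^{\bC}$, or by treating it as a measure-zero set). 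So equality in the curvature step already pins down $W$ up to a scalar function.

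Next I would use the equality condition in \eqref{schwarz}, namely \eqref{parallelvectors}: $d_\nabla'' W = \pm \mu\, d'f$. Substituting the special form $W = \lambda\, d'f(\partial/\partial z)$ and using the harmonicity $d_\nabla'' d'f = 0$, the left side becomes $(\bar\partial \lambda)\, d'f$-type term (the covariant $\bar z$-derivative hits only $\lambda$), while the right side is $\mu\, d'f = m\, \tfrac{\partial f}{\partial z}\, d\bar z$ up to sign. Comparing, both sides are proportional to $d'f$, and matching the scalar coefficients gives a first-order $\bar\partial$-equation of the shape $\partial_{\bar z}\lambda = \pm m$ (roughly), i.e. $m$ would be $\bar\partial$ of a globally defined function on $M$. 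But $m$ is the coefficient of the Beltrami differential $\mu$, and $\mu$ represents a nonzero class in $H^1(M, T_{1,0}M)$ — equivalently, by Serre duality / the $\bar\partial$-Dolbeault description, $\mu$ is \emph{not} of the form $\bar\partial$ of a smooth vector field (a smooth trivial Beltrami differential). This contradiction would show equality is impossible, hence $\Delta E(0) > 0$.

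The main obstacle I anticipate is the bookkeeping in the previous paragraph: translating "$W$ is pointwise proportional to $\partial f/\partial z$, and $d_\nabla'' W = \pm \mu d'f$" into a clean statement that $\mu$ is a cohomologically trivial Beltrami differential. One has to be careful that $W$ is $f^*TN^{\bC}$-valued while $\mu$ lives on $M$, so the "scalar $\lambda$" is really extracted by pairing against $d'f$, and one must check this extraction is consistent (single-valued, smooth away from zeros of $d'f$) and that the resulting $\bar\partial$-equation genuinely lives on $M$ rather than just on $f^*TN$. A secondary subtlety is the sign ambiguity $\pm$ in \eqref{parallelvectors} and whether it can be localized/globalized; I expect it is locally constant hence constant on the (connected) surface $M$, and either sign leads to the same conclusion that $[\mu] = 0$. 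If the direct argument gets delicate near zeros of $d'f$, the fallback is to observe that harmonicity makes $\phi = \langle d'f, d'f\rangle\, dz^2$ a holomorphic quadratic differential, so $d'f$ vanishes on a finite set, and the equality conditions then hold on the complement, which by density and continuity suffices to force the cohomological vanishing of $\mu$.
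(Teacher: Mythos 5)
Your proposal is correct and follows essentially the same route as the paper: equality would force $\rho=0$, hence (by strict negativity of the Hermitian curvature) $W=\lambda\,d'f$, and then harmonicity turns the equality condition $d_\nabla''W=\pm\mu\,d'f$ into $\mu=\pm\bar\partial\lambda$, contradicting $[\mu]\neq0$ in $H^1(M,T_{1,0}M)$. The only caveat you raise that is moot: for a map of a Riemann surface, $d'f_x=0$ if and only if $df_x=0$ (the real and imaginary parts of $d'f(\partial/\partial z)$ are $\tfrac12 df(\partial_x)$ and $-\tfrac12 df(\partial_y)$), so the hypothesis $df\neq0$ already guarantees $d'f$ is nowhere zero and $\lambda$ is a globally smooth section of $T_{1,0}M$.
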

 
 Note that the following Corollary includes the result of Tromba mentioned in the introduction:
 
 \begin{cor}
 \label{cor-nobranch}
 Suppose that $N$ is a  Riemann surface with a metric of strictly negative curvature, and that the harmonic map $f:M\to N$  has at most generic singularities, in other words, $f$ is either non-singular or has only folds and cusps, and suppose $\mu$ is as above.  Then $\Delta E (0) > 0$ for the variation (\ref{originofdisk}).

 \end{cor}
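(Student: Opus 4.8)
The plan is to deduce Corollary~\ref{cor-nobranch} directly from Theorem~\ref{thm-strict}. That theorem has three hypotheses: strictly negative Hermitian sectional curvature of $N$, nowhere-vanishing of $df$, and non-triviality of the class of $\mu$ in $H^1(M,T_{1,0}M)$. The last is assumed in the corollary, so the whole task reduces to deriving the first two from the hypotheses available: that $N$ is a surface with a metric of strictly negative Gauss curvature, and that $f$ has at most folds and cusps.

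For the curvature hypothesis I would use the fact that on a $2$-manifold the Riemann tensor is completely determined by the Gauss curvature $K$, namely $R(X,Y,Z,W)=K(\langle X,Z\rangle\langle Y,W\rangle-\langle X,W\rangle\langle Y,Z\rangle)$ in the paper's sign convention, extended complex-multilinearly to $TN\otimes\bC$. Setting $Z=\bar X$ and $W=\bar Y$ gives $R(X,Y,\bar X,\bar Y)=K\big(|X|^{2}|Y|^{2}-|\langle X,\bar Y\rangle|^{2}\big)$, where $h(X,Y)=\langle X,\bar Y\rangle$ is the positive-definite Hermitian extension of the metric to $TN\otimes\bC$ and $|X|^{2}=\langle X,\bar X\rangle$. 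By the Cauchy--Schwarz inequality for $h$ the bracketed quantity is nonnegative, and it vanishes exactly when $X$ and $Y$ are linearly dependent over $\bC$, i.e. exactly when $X\wedge Y=0$. Hence, when $K<0$ everywhere, $R(X,Y,\bar X,\bar Y)<0$ whenever $X\wedge Y\ne 0$, which is precisely the strict negativity hypothesis of Theorem~\ref{thm-strict}.

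For the non-vanishing of $df$ I would observe that folds and cusps are exactly the singularities at which the differential of a map between surfaces has rank one: in the Whitney normal forms $(x,y)\mapsto(x,y^{2})$ and $(x,y)\mapsto(x,y^{3}-xy)$ the Jacobian matrix is nowhere the zero matrix, while at a regular point the rank is two. Therefore a map with at most generic singularities has $df$ nowhere zero on $M$. With both points in hand, all three hypotheses of Theorem~\ref{thm-strict} hold for the variation (\ref{originofdisk}), and its conclusion gives $\Delta E(0)>0$. I do not expect a genuine obstacle here; the only substantive step is the curvature identity of the second paragraph, and the one thing needing care is the bookkeeping of sign and normalization conventions, so that ``non-positive/negative Hermitian sectional curvature'' as used throughout the paper matches ``negative Gauss curvature'' for a surface, together with confirming that the equality case of the Hermitian Cauchy--Schwarz inequality is literally the condition $X\wedge Y=0$ appearing in Theorem~\ref{thm-strict}.
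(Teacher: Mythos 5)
Your proposal is correct and follows essentially the same route as the paper: the paper's proof likewise reduces the corollary to Theorem~\ref{thm-strict} by observing that folds and cusps have rank-one differential, so $df$ never vanishes. The only difference is that you also write out the verification that negative Gauss curvature of a surface target gives strictly negative Hermitian sectional curvature via the Cauchy--Schwarz equality case; the paper takes this equivalence for granted (asserting it in the introduction), and your computation of it is correct.
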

 
 \begin{proof}
 To prove the theorem, suppose that equality holds in  (\ref{inequality}).  Then we must have that equality holds in (\ref{schwarz}) and in (\ref{finalinequality}).   If equality holds in (\ref{schwarz}) we must have that the condition (\ref{parallelvectors}) holds, namely 
\begin{equation}
\label{repeatparallelvectors}
d_\nabla''  W = \pm \mu d'f,
\end{equation} 
and if equality holds in (\ref{finalinequality}), we must have $\rho = 0$, which means that $\frac{\partial f}{\partial z}\wedge  W = 0$.   Since, by assumption, $d'f$ never vanishes, this implies that 
\begin{equation}
\label{proportional}
 W = \lambda d'f
\end{equation}
 for some smooth section $\lambda$ of the bundle $T_{1,0}M$.  Then 
\begin{equation}
\label{trivialvariation}
d_\nabla'' W =\bar  \partial \lambda d'f + \lambda d_\nabla'' d'f = \bar\partial \lambda d'f
\end{equation}
because $f$ is harmonic: $d_\nabla'' d'f = 0$.  
Comparing this equation with (\ref{repeatparallelvectors}) we must have $\mu = \pm  \bar \partial \lambda$, contradicting that $\mu\ne 0$ in $H^1(M,T_{1,0}M)$. 

To prove the Corollary, just obseve that for a non-singular map or a map with only folds or cusps, the rank of $df$ is at least one at each point, so $df$ is never $0$ and the Theorem applies.
 \end{proof}
 
This argument generalizes easily to the situation when $df$ has zeros.  The vector field $\lambda$ then has singularities, but defines a current which localizes the deformation class $\mu$.

 Fix $J_0$ as before and a direction $H$ at $J_0$, or, what is the same by (\ref{beltrami}), choose $\mu\in A^{0,1}(M,T_{1,0}M)$, (where $A^{p,q}$ denotes smooth forms of type $(p,q)$).   The class $[\mu]\in H^1(M,T_{1,0}M)$ represents a tangent vector to Teichm\"uller space at $J_0$.  We want to give a necessary and sufficient condition for the complex hessian of $E$ at $J_0$  to vanish in the direction $[\mu]$: $\bar\partial\partial _{jJ_0} E (\mu) = 0$. 

If $f$ is not constant then $df$ has isolated zeros.  Namely, $d_xf=0$ if and only if $d_x'f =0$ an the latter is a holomorphic section of $T^{1,0}M\otimes f^*TN^\bC$.   If $d_xf = 0$, choose a local holomorphic coordinate $z$ centered at $x$ and a local holomorphic trivialization of $f^*TN^\bC$.  Then $d'f = \frac{\partial f}{\partial z} dz$ and 
\begin{equation}
\label{zero}
\frac{\partial f}{\partial z} = z^m v(z)
\end{equation}
where $v(z)$ is a holomorphic vector function of $z$ with $v(0)\ne 0$ and $m\ge 1$.  Let $x_1,\dots ,x_k$ be the zeros of $d'f$, and let $m_i$ be the exponent $m$ for this zero (the multiplicity of $x_i$),  and  let 
\begin{equation}
\label{zerodivisor}
Z = \sum_i m_i \ x_i 
\end{equation}
be the zero divisor of $d'f$.    We write $|Z| = \{x_1,\dots,x_k\}$ for the (set-theoretic) support of $Z$, we write $\mathcal{O}(Z)$ for the line bundle over $(M,J_0)$ with canonical section $\sigma$ that defines the divisor $Z$.

\begin{thm}
\label{thm-equality}
Suppose that, as in Theorem~\ref{thm-strict}, $N$ has strictly negative Hermitian curvature, and suppose that at a point $J_0\in \mathcal{C}$ and tangent direction $\mu\in T_{J_0}\mathcal{C}$ with $[\mu]\ne 0\in H^1(M,T_{1,0}M)$, the complex Hessian of $E$ vanishes:  $\Delta E (0)  = 0$ for the variation (\ref{originofdisk}).  Then
\begin{enumerate}
\item The zero set $|Z|$ of $df$ is not empty.

\item The  closure of  $d'f(T_{1,0}M)$ in $f^*TN^\bC$ is a line sub-bundle  $L\subset f^*TN^\bC$,   $L\cong\mathcal{O}(Z)\otimes T_{1,0}M$.  

\item The variation field $ W $ is a $C^\infty$ section of $L$, and the equation (\ref{proportional}) holds for a smooth section $\lambda$ of $T_{1,0}(M\setminus|Z|)$.   In particular, $ W$ is tangent to $f(M)$.  In other words, the image of $M$ does not move to first order under the deformation.

\item   At each $x_i\in |Z|$ the section $\lambda$ of $T_{1,0}(M\setminus |Z|)$ defines a residue current $\alpha_i$.  The variation field $W$ does not vanish identically on $Z$ in the sense that not all of these currents $\alpha_i$ can vanish.

\item The section $\lambda$ on $M\setminus |Z|$ extends to  a current on $M$ and we have the equation of currents 
\begin{equation}
\label{distributionalequation}
 \bar\partial \lambda = \pm \mu - \sum \alpha_i.
\end{equation}

In particular, $[\mu]$ has a representative with support in $Z$.

\item The derivative of $E$ vanishes at $J_0$ in the direction $\mu$: $d_{J_0} E(\mu) = 0$.
\end{enumerate} 
\end{thm}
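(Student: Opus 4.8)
The plan is to assume equality $\Delta E(0)=0$ and exploit that this forces equality in both of the two inequalities used in the proof of Theorem~\ref{thm-psh}: equality in the Schwarz/AM--GM estimate (\ref{schwarz}) and equality in (\ref{finalinequality}). As in the proof of Theorem~\ref{thm-strict}, equality in (\ref{finalinequality}) forces $\rho=0$, i.e. $\int_M R(\partial f/\partial z,W,\overline{\partial f/\partial z},\overline W)\,dx\wedge dy=0$; since the integrand is Hermitian sectional curvature, which is strictly negative whenever $\partial f/\partial z\wedge W\ne 0$, we conclude $\partial f/\partial z\wedge W\equiv 0$ pointwise. On the open set $M\setminus|Z|$ where $d'f\ne 0$ this gives a smooth function $\lambda$ with $W=\lambda\, d'f$, establishing (3) away from $Z$; equality in (\ref{schwarz}) gives $d_\nabla''W=\pm\mu\, d'f$, and combining with (\ref{trivialvariation}) yields $\bar\partial\lambda=\pm\mu$ on $M\setminus|Z|$. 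The first thing I would do is record these two consequences cleanly, exactly mirroring the computation already done for Theorem~\ref{thm-strict}, the only difference being that now $d'f$ is allowed to vanish, so $\lambda$ is only defined and smooth off the finite set $|Z|$.

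Next I would analyze the local behavior near a zero $x_i$. Using the normal form (\ref{zero}), $d'f=z^{m}v(z)dz$ with $v(0)\ne 0$, and the pointwise relation $W=\lambda d'f$ on the punctured disk shows that the closure of $d'f(T_{1,0}M)$ is a line subbundle $L$ with $L\cong \mathcal O(Z)\otimes T_{1,0}M$ — this is the content of (2) — because locally $d'f$ spans the same line as the nonvanishing section $v(z)\frac{\partial}{\partial z}\otimes dz$, whose tensor with $z^{-m}$ (a meromorphic frame of $\mathcal O(Z)$) trivializes $L\otimes \mathcal O(-Z)\otimes(T_{1,0}M)^{-1}$. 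Since $W=\partial f/\partial t+i\partial f/\partial s$ is a global $C^\infty$ section of $f^*TN^\bC$ that lies in $L$ off $|Z|$, by continuity it is a $C^\infty$ section of $L$, giving (3) in full and the statement that $W$ is tangent to $f(M)$. The function $\lambda=W/d'f$ then has, near $x_i$, a pole of order at most $m_i$ (it is $C^\infty$-section-of-$L$ divided by $z^m v(z)$), so it extends to a current on $M$; the distributional $\bar\partial$ of this extension differs from the smooth $\bar\partial\lambda=\pm\mu$ on $M\setminus|Z|$ by a sum of distributions supported at the $x_i$, which are precisely the residue currents $\alpha_i$ attached to the principal parts of $\lambda$. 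This yields (5), the equation $\bar\partial\lambda=\pm\mu-\sum\alpha_i$, and hence that $[\mu]$ has a representative supported in $Z$.

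For (1) and (4): if $|Z|=\emptyset$ then $\lambda$ is a global smooth section of $T_{1,0}M$ with $\bar\partial\lambda=\pm\mu$, contradicting $[\mu]\ne 0$ in $H^1(M,T_{1,0}M)$ — this is the same contradiction as in Theorem~\ref{thm-strict} — so $|Z|\ne\emptyset$, giving (1). Similarly, if all $\alpha_i=0$ then $\lambda$ is actually a global smooth section (each principal part vanishes, so the pole is removable) and again $\bar\partial\lambda=\pm\mu$ contradicts $[\mu]\ne 0$; hence not all $\alpha_i$ vanish, which is (4) (phrased as: $W$ does not vanish identically on $Z$). Finally (6): the first variation formula for the two-variable energy gives $d_{J_0}E(\mu)=\tfrac12\int_M\langle df\wedge (\partial J/\partial s)\,df\rangle$ evaluated appropriately — more precisely, using the critical-point property of $f_J$ (Lemma~\ref{lem-secondderivative} with the term linear in the $J$-variable), $dE$ in the direction of a variation of $J$ with derivative $H=\mu+\bar\mu$ reduces to a curvature-free expression that, after passing to the $(1,0)$/$(0,1)$ decomposition, is a constant multiple of $\int_M\langle \mu d'f\wedge \overline{d'f}\rangle$ type pairing; using $W=\lambda d'f$ and the equation $\bar\partial\lambda=\pm\mu-\sum\alpha_i$ together with Stokes' theorem on $M\setminus|Z|$ (the residue terms accounting for the boundary contributions around the $x_i$) one checks this integral vanishes.

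The main obstacle I expect is item (5): making rigorous the passage from the smooth identity $\bar\partial\lambda=\pm\mu$ on $M\setminus|Z|$ to an equation of currents on all of $M$, and identifying the defect precisely as $-\sum\alpha_i$ for the correct notion of residue current $\alpha_i$. This requires care with (a) the precise order of the pole of $\lambda$ at each $x_i$ and the fact that $L\cong\mathcal O(Z)\otimes T_{1,0}M$ is exactly what makes $\lambda$ meromorphic-type rather than worse, (b) a local computation of $\bar\partial(z^{-k})$ as a current (a derivative of the Dirac mass), and (c) checking that $W$ being merely $C^\infty$ (not holomorphic) does not introduce extra distributional terms beyond those supported at $|Z|$ — this is where one uses that $W\in C^\infty(L)$ and $d'f$ is a holomorphic frame of $L(-Z)$ up to the $z^m$ factor, so $\lambda$ is ($C^\infty$ function)$\,\cdot z^{-m}$ and its $\bar\partial$ as a current is (smooth)$\,\cdot z^{-m}$ plus (smooth)$\,\cdot\bar\partial(z^{-m})$, the first piece being the $L^1_{loc}$ function $\pm\mu$ and the second the residue current. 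Everything else is a matter of assembling consequences of the two equality cases, which the proof of Theorem~\ref{thm-strict} has already rehearsed in the zero-free case.
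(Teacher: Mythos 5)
Your items (1)--(5) follow the paper's argument essentially step for step: equality forces equality in (\ref{schwarz}) and (\ref{finalinequality}), strict negativity of Hermitian curvature then gives $\frac{\partial f}{\partial z}\wedge W\equiv 0$, the normal form (\ref{zero}) identifies $L\cong\mathcal{O}(Z)\otimes T_{1,0}M$, and $\lambda$ extends as a principal value current whose $\bar\partial$ acquires residue currents at the $x_i$. One repair is needed in item (4): you claim that if all $\alpha_i$ vanish then ``each principal part vanishes, so the pole is removable'' and $\lambda$ is globally smooth. That is false as stated: locally $\lambda=\eta(z)z^{-m}\,\partial/\partial z$ with $\eta$ only $C^\infty$, and by (\ref{deltas}) the vanishing of $\alpha_i$ only forces $\partial^j\eta/\partial z^j(0)=0$ for $j\le m-1$, which does not make $\eta/z^m$ even continuous (take $\eta=\bar z^{\,m}$). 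The fix is to stay at the level of currents, as the paper does: if all $\alpha_i=0$ then (\ref{distributionalequation}) reads $\bar\partial\lambda=\pm\mu$ as an equation of currents on all of $M$, and since currents compute Dolbeault cohomology this already contradicts $[\mu]\ne 0$. (The same remark applies to your proof of (1), though there $\lambda$ genuinely is smooth.)

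The genuine gap is item (6). The first variation of $E$ in the direction $\mu$ is $c\,\Re\int_M\mu\cdot Q$ with $Q=\langle d'f,d'f\rangle$ the \emph{holomorphic} Hopf differential (complex-bilinear pairing), not the Hermitian-looking quantity $\int_M\langle\mu\, d'f\wedge\overline{d'f}\rangle$ you write; the latter pairs $\mu$ with the energy density and the argument cannot be run with it. The proof of (6) hinges on two facts you do not supply: $\bar\partial Q=0$ (harmonicity), so that pairing (\ref{distributionalequation}) with $Q$ and applying Stokes gives $\int_M\mu\cdot Q=\pm\sum\alpha_i(Q)$; and $Q=\langle d'f,d'f\rangle$ vanishes at each $x_i$ to order at least $2m_i$, while $\alpha_i$ involves only derivatives of its test object of order $\le m_i-1$, whence $\alpha_i(Q)=0$. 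Your ``Stokes with residue boundary terms'' is the right mechanism, but without identifying $Q$ correctly and without the order-$2m_i$ vanishing the residue terms have no reason to drop out, so the conclusion $d_{J_0}E(\mu)=0$ is not yet established.
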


\begin{cor}
\label{cor-strict}
Suppose that $N$ has strictly negative Hermitian sectional curvature and that in the variation (\ref{originofdisk}) the image of $f$ varies to first order, in other words, $W$ is not tangent to $f(M)$.  Then $\Delta E(0,0) >0$.
\end{cor}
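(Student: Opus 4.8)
The plan is to obtain the corollary as the contrapositive of part (3) of Theorem~\ref{thm-equality}. By Theorem~\ref{thm-psh} the energy already satisfies $\Delta E(0,0)\ge 0$ for every holomorphic family (\ref{holomorphicfamily}), so it is enough to rule out equality: assume, for contradiction, that $\Delta E(0,0)=0$ for the variation (\ref{originofdisk}), and deduce that $W$ is tangent to $f(M)$, contrary to hypothesis.

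I would first handle the generic case $[\mu]\ne 0$ in $H^1(M,T_{1,0}M)$. Then the hypotheses of Theorem~\ref{thm-equality} hold verbatim --- strictly negative Hermitian curvature, $[\mu]\ne 0$, and $\Delta E(0)=0$ --- so conclusion (3) of that theorem applies: $W$ is a $C^\infty$ section of the line subbundle $L=\overline{d'f(T_{1,0}M)}\subset f^*TN^{\bC}$, with $W=\lambda\,d'f$ on $M\setminus|Z|$ for a smooth section $\lambda$ of $T_{1,0}(M\setminus|Z|)$. In particular $W$ takes values in $L$, which is exactly the statement that $W$ is tangent to $f(M)$ and the image of $f$ does not move to first order. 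This contradicts the hypothesis, so $\Delta E(0,0)\ne 0$, and hence $\Delta E(0,0)>0$.

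The step I expect to require the most care is the degenerate case $[\mu]=0$, which is not covered by Theorem~\ref{thm-equality}; the point is that there $W$ is automatically tangent to $f(M)$, so the hypothesis of the corollary is never met. Writing $\mu=\bar\partial V$ with $V$ a smooth section of $T_{1,0}M$, the infinitesimal deformations of complex structure $H=\mu+\bar\mu$ and $J_0H$ are infinitesimal actions of genuine vector fields on $M$; since the harmonic map depends smoothly on the complex structure and transforms as $f_{\phi^*J}=f_J\circ\phi$ under diffeomorphisms, it follows that $\partial f/\partial s$ and $\partial f/\partial t$ at the base point both lie in the image of $df_{J_0}$ (indeed they vanish where $df$ vanishes), so $W=\partial f/\partial t+i\,\partial f/\partial s$ is tangent to $f(M)$. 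One can also bypass both the case distinction and the appeal to Theorem~\ref{thm-equality} by re-examining the proof of Theorem~\ref{thm-psh}: $\Delta E(0,0)=0$ forces $a=b$, which together with the inequality $a\le b+4\rho$ from (\ref{finalinequality}) and $\rho\le 0$ forces $\rho=0$; strict negativity of the Hermitian sectional curvature then forces the integrand of $\rho$ to vanish pointwise, i.e. $\partial f/\partial z\wedge W=0$, which on the dense open set where $d'f\ne 0$ says precisely that $W$ is a complex multiple of $d'f$ --- tangent to $f(M)$ --- and by continuity $W$ is a section of $L$ on all of $M$.
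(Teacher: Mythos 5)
Your proposal is correct and follows essentially the same route as the paper, which presents the corollary as an immediate contrapositive of part (3) of Theorem~\ref{thm-equality}: vanishing of the complex Hessian forces $\rho=0$, hence $\frac{\partial f}{\partial z}\wedge W=0$ by strict negativity of the Hermitian curvature, hence $W$ is a section of $L$ and so tangent to $f(M)$. Your separate treatment of the case $[\mu]=0$ (which the theorem's hypotheses exclude but the corollary's do not) is a legitimate and worthwhile extra step, and your ``bypass'' argument re-running the equality analysis of Theorem~\ref{thm-psh} is exactly the mechanism the paper itself uses inside the proof of Theorem~\ref{thm-equality}.
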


\begin{remark}
\label{rem-simplezeros}
The conclusions of the Theorem are easier to state in the case that all the multiplicities $m_i = 1$.   Then the fourth statement says that we cannot have $ W(x_i) = 0$ for all $i$, in other words,  some of the zeros of $df$ have to move under the deformation.  The fifth statement says that $\mu$ is cohomologous, as a current, to a sum of delta functions at the $x_i$ for  which $ W(x_i)\ne 0$, in other words, the variation of complex structure can be concentrated at the zeros of $df$ that move under the deformation. 
\end{remark}

\begin{remark}
\label{rem-schiffer}
The situation described in the theorem does occur, and there is a classical example:  branched covers and Schiffer variations.   Namely, let $N$ be itself a hyperbolic Riemann surface, and let $f:(M,J_0)\to N$ be a holomorphic branched cover.  Suppose, for simplicity, that $f$ is a double cover, so that all the zeros of $d'f$ have multiplicity one and  we are  in the situation of  Remark~\ref{rem-simplezeros}.  Let $p\in M$ be one of the branch points, and let $q = f(p)\in N$.  If we move $q$ and we take replace $(M,J_0)$ by the double branched cover of $N$ branched over $q$, we get a family $(M,J_q)$ where the variation $\mu$ is concentrated at the point $p$ that moves under the deformation.  This is called a (first order, global)  {\em Schiffer variation}, see, say,  Example 1 of Chapter 2, section 3 of \cite{MK}.   The harmonic map $f_q$ is still holomorphic, and having constant degree as $q$ varies, it has constant energy and thus $d E = \Delta E = 0$ in this family.   This is the motivating example behind the theorem.   Examples for zeros of arbitrary order are given by  higher order Schiffer variations.
\end{remark}

\begin{remark}
\label{rem-conjectureschiffer}
More genrally, suppose $N$ has strictly negative Hermitian sectional curvature but it is of any dimension, and suppose that $N_0$ is a hyperbolic Riemann surface.   Suppose that $f_0:(M,J_0)\to N_0$ is a branched cover as in Remark \ref{rem-schiffer}, and, for simplicity, assume it to be a double cover.  Let $(f_0 )_q:(M,J_q)\to N_0$ be as in Remark \ref{rem-schiffer}.  Suppose further that $g:N_0\to N$ is a harmonic map.  Define maps $f_q = g\circ (f_0)_q:(M,J_q)\to N$.  We get a family of harmonic maps of constant energy.   It is natural to ask if (in the case that $d'f$ has simple zeros), this is the only way to obtain directions in which the complex Hessian vanishes (and the corresponding construction for zeros of any order).  I this were true, it would give a new proof of some well-known theorems on factorization theorems for harmonic maps through holomorphic maps of Riemann surfaces.  See, for example,  the discussion of the fibrations and of factorizations in Chaptes 2, 4, 6 of   \cite{ABCKT} and the references given there.    In fact a localized version of some of these theorems in which a compact K\"ahler manifold is replaced by a neighborhood of a suitable complex curve in it.

\end{remark}

\begin{proof}[Proof of Theorem~\ref{thm-equality}.]
The first statement follows from  Theorem~\ref{thm-strict}: if  $df$ never vanishes  then $\Delta E(0)>0$.   For the second statement, the line sub-bundle $L\subset f^*TN^\bC$ is spanned, in $M\setminus |Z|$, by the vectors $\partial f/\partial z$, and in a neighborhood of each $x_i$, by the vectors $v(z)$ in (\ref{zero}).  It is clear that the image of $d'f$ coincides with $L$ on $M\setminus |Z|$, and from (\ref{zero}) that over the domain of the local coordinate $z$, the closure of the image of $d'f$ coincides with $L$.   It is also clear that $L\cong T_{1,0}M\otimes \mathcal{O}(Z)$, in fact, under this isomorphism, the bundle map $d'f:T_{1,0}\to T_{1,0}\otimes\mathcal{O}(Z)$ coincides, up to a constant factor, with multiplication by the canonical section  $\sigma$ of $\mathcal{O}(Z)$.

The third statement follows, as before, from the strict negativity of Hermitian curvature, that forces $\frac{\partial f}{\partial z}\wedge W = 0$, hence $W$ lies in $L$ since it does in the dense open set where $\frac{\partial f}{\partial z}\ne 0$.    This means that (\ref{proportional}) holds on $M\setminus |Z|$, for some smooth section of $\lambda$ of $T_{1,0}(M\setminus |Z|)$, but we have to examine it more closely on $|Z|$.   Fix a zero of $d'f$ and let $z$ be a local coordinate centered at this zero and 
 write $\lambda(z) = l(z)\frac{\partial}{\partial z}$  for some smooth function $l$ defined for $z\ne 0$.  Then, using (\ref{zero}),  (\ref{proportional}) becomes
\begin{equation}
\label{lambdaeta}
 W = \lambda d'f = l(z) z^m v(z) = \eta(z) v(z),
\end{equation}
where $\eta$ is smooth for all $z$.   Moreover, we get that (\ref{repeatparallelvectors}) becomes
\begin{equation}
\label{dbar}
\bar\partial l z^m v(z) = \pm m(z) z^m v(z) d\bar z.
\end{equation}
and therefore we get an equality of differential forms on $\{z\ne 0\}$:
\begin{equation}
\label{predistribution}
\bar\partial \lambda = \bar\partial( l\frac{\partial}{\partial z} )= \pm m \frac{\partial}{\partial z} \otimes d\bar z = \pm \mu\ \ \ \hbox{for}\ z\ne 0.
\end{equation}
but we need to check what the distributional derivative is at $0$.  To this end, fix a disk $D_\rho = \{|z|<\rho\}$ centered at $0$.   On $D_\rho$  (\ref{lambdaeta}) gives that $\lambda = l(z)\frac{\partial}{\partial z}$ where 
\begin{equation*}
l(z) = \frac{\eta(z)}{z^m}\ \hbox{where } \eta\hbox{ is smooth.}
\end{equation*}
This type of singularity of $\lambda$, even though it is not integrable for $m>1$,  still defines a {\em principal value current}, namely the linear functional on $A_c^{0,1}(D_\rho ,T^{1,0}\otimes T^{1,0})\subset A^{0,1}(M,T^{1,0}\otimes T^{1,0}) $ that assigns to $\phi(z) dz^2\bar dz$, where $\phi\in C_c^\infty(D_\rho)$, the number 

\begin{equation*}
\lambda[\phi dz^2\bar dz] = \lim_{r\to 0} \frac{1}{2\pi i} \int_{r<|z|<\rho} \ \frac{\eta(z)}{z^m} \phi(z) dz\bar dz.
\end{equation*}
  It is standard that this principal value limit exists.
Its  distributional derivative is   the linear functional $\bar \partial \lambda $ on the space of compactly supported quadratic differentials defined by 
\begin{equation*}
\label{distributionalderivative}
(\bar \partial \lambda) [\phi dz^2 ] = - \lim_{r\to 0} \frac{1}{2\pi i} \int _{r< |z|\le \rho}\frac{\eta(z)}{z^m} \bar\partial \phi dz
\end{equation*}
Integrating by parts we get 
\begin{equation*}
\label{stokes}
\bar\partial\lambda[\phi dz^2] = \lim_{r\to 0} \Big(\frac{1}{2\pi i} \int_{|z| = r} \frac{\eta(z)}{z^m} \phi(z) dz + \frac{1}{2\pi i} \int_{r\le |z|\le \rho} \bar\partial l(z)  \phi(z) dz\ \Big)
\end{equation*}

The second integral converges to $\frac{1}{2\pi i} \int_{|z| \le\rho} m(z) \phi(z) dzd\bar z$, while the first integral converges to what is called the {\em residue current}
\begin{equation}
\label{residuecurrent}
\alpha[\phi dz^2] = \lim_{r\to 0} \frac{1}{2\pi i} \int_{|z| = r} \lambda\cdot  \phi dz^2 \ = \ \lim_{r\to 0}\frac{1}{2\pi i}\int l(z) \phi(z) dz.
\end{equation}
It is standard and easy to see that this limit exists and has an explicit formula:
\begin{equation}
\label{deltas}
\alpha[\phi]  =  \sum_{j+l = m-1} c_j \frac{\partial^j\eta}{\partial z^j}(0)\frac{\partial^l \phi }{\partial z^l}(0),
\end{equation}
for suitable constants $c_j$, 
in other words, a sum of derivatives of delta functions at $0$ with coefficients multiples of the $z$-derivatives of $\eta$ at $0$.    Doing this at each zero $x_i$ of $df$, calling the resulting current (\ref{residuecurrent}, \ref{deltas}) $\alpha_i$, we get the equation of currents:
\begin{equation}
\label{distributionalequation}
 \bar\partial \lambda = \pm \mu - \sum \alpha_i
\end{equation}
These formulas prove the fourth and fifth statements.  Namely, (\ref{distributionalequation}) and (\ref{residuecurrent},\ref{deltas}) prove statement (5), and, if all residue currents of statement (4) vanished, then we would get all $\alpha_i = 0$, hence $\bar \partial\lambda = \mu$, contradicting the assumption $[\mu]\ne 0$.

Observe that formula (\ref{deltas}) shows that  statement (5) of the theorem is equivalent to saying: not all partial derivatives $\frac{\partial^j \eta_i}{\partial z^j}(0), \ 0\le j\le m_i-1$, $1\le i \le k$, vanish, where $\eta_i$ is as in (\ref{lambdaeta}) for the zero $x_i$ of $d'f$.

Finally, for the last statement,  recall the well-known formula

\begin{equation}
\label{}
d_{J_0}E(\mu) = c\  \Re\Big(  \int_M \mu \cdot Q \Big),
\end{equation}
(for some constant $c\ne 0$ that is not important here) 
which is obtained by differentiating (\ref{energy}) with respect to $J$, and where $Q$ is the holomorphic quadratic differential (Hopf differential) of $f$, namely $Q =( f^*h)^{(2,0)} = <d'f,d'f>$ where $h$ is the metric tensor of $N$ and $\mu\cdot Q$ denotes the natural pairing with values in $(1,1)$-forms.   See Theorem 3.1.3 of \cite{Tromba} for a proof and \cite{Wentworth} for a more general statement as well as a history of this fomrula.

Since $\bar\partial  Q = 0$,  (\ref{distributionalequation}) gives  $\bar\partial(\lambda\cdot Q) = \mu\cdot Q - (\sum \alpha_i)\cdot Q$, hence 
\begin{equation*}
\int_M \mu\cdot Q = \sum \alpha_i(Q).
\end{equation*}
 Since $Q = <d'f,d'f>$ vanishes at the zeros $x_i$ of $d'f$, to multiplicity at least $2m_i$, we get $\alpha_i(Q) = 0$ for all $i$, thus $d_{J_0} E (\mu) = 0$.

\end{proof}

\section{An application to rigidity theory}

We now sketch an application, due to Gromov \cite{Gromov}, of the plurisubharmonicity of energy to the Siu-Sampson rigidity theory.   Recall that the theory originated in Siu's method for showing that a harmonic map $f:V\to N$ is holomorphic, where $V$ is a compact K\"ahler manifold and $N$ is a Hermitian manifold with universal cover is a bounded symmetric domain $D$.  For any harmonic $f:V\to N$ as above, Siu proves  several theorems, including:

\begin{itemize}
\item The map $f$ is pluriharmonic: its restriction to any (germ of a) complex curve is harmonic.
\item If $D$ is irreducible and not the hyperbolic plane, and the rank of $f$  equals  the dimension of $D$, then $f$  is holomorphic or anti-holomorphic.
\item If $D = B^n$, the unit ball in $\bC^n$, $n\ge 2$, and the rank  of $f$ is $> 2$, then $f$ is holomorphic or anti-holomorphic.
\item For each irreducible $D$ other than the hyperbolic plane there is an integer $r(D)< dim_\bR(D)$ so that if the  rank of $f$ is  $>r(D)$, then $f$  is holomorphic or anti-holomorphic.

\end{itemize}

Unlike the case of $B^n$, where $r(B^n) = 2$, the number $r(D)$ can be rather large.  For example, let  $D = D_{p,q}$, $1\le p \le q$ be the generalized ball (the symmetric space of $SU(p,q)$,) of complex $p$ by $q$ matrices $Z$ with $Z^*Z<I_p$.  Then there is a totally geodesically embedded $D_{p-1,q-1}\times D_{1,1}\subset D_{p,q}$ of block-diagonal matrices, and it is easy to find a discrete group $\Gamma$ acting freely and co-compactly on $D_{p,q}$ with a subgroup of the form $\Gamma_1\times \Gamma_2$, each factor acting discretely and co-compactly on $D_{p-1,q-1}$, $D_{1,1}$ respectively.  Letting $V = V_1\times V_2$ be the quotient manifold with the corresponding product decomposition, we obtain a holomorphic map $f:V_1\times V_2\to W = \Gamma\backslash X$.  Now deform the complex structure on $V_2$ to $V_2'$ so that the harmonic map in the homotopy class $f|_{V_2}$ is not holomorphic for the complex structure of $V_2'$ to obtain a harmonic map $g:V_1\times V_2'$ of rank $2((p-1)(q-1)+1)$ that is not holomorphic.  Thus $r(D_{p,q})\ge 2((p-1)(q-1)+1)$ (and in fact equality holds, see \cite{SiuJDG}).

This example is extremely special, and we would like to have theorems to the effect that harmonic maps of low rank substantially different from this example are holomorphic or anti-holomorphic.  One way of excluding this example would be to require that $f_*(\pi_1(V))$ be Zariski dense in $G$, the group of biholomorphisms of $X$.   We illustrate how plurisubharmonicity of energy can be used by sketching a proof of the next theorem, which is a special case of Gromov's results in \S 4.6 of  \cite{Gromov}.   
   
\begin{thm}
\label{thm-amplecurve}
Let $V$ be a smooth projective algebraic surface, let $N$ be a Hermitian manifold with universal cover an irreducible bounded   symmetric domain $D$, let $f:V\to N$ be a harmonic map.   Let $\hat V\to \bP^1$ be a Lefschetz pencil, and suppose that there is a generic fiber $V_q$, $q\in \bP^1$, so that $f|_{V_q}$ is holomorphic.  Then $f$ is holomorphic.
\end{thm}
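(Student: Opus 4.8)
The plan is to carry out Gromov's argument, which combines the Siu--Sampson pluriharmonicity theorem, the subharmonicity of energy established above, and the geometry of a Lefschetz pencil. Since the universal cover $D$ of $N$ is a bounded symmetric domain, $N$ satisfies $K_\bC\le 0$ (\cite{Sampson}), so $f$ is pluriharmonic (\cite{Siu},\cite{Sampson}); in particular its restriction to every smooth fiber $V_q$, $q\in\bP^1\setminus\Sigma$, of the pencil is harmonic, where $\Sigma\subset\bP^1$ is the finite set of critical values. Over a disk around any regular value the pencil is a holomorphic family of complex structures on a fixed surface carrying the smooth family of harmonic maps $f|_{V_q}$, and the proof of Theorem~\ref{thm-psh} applies verbatim — it uses only that each $f|_{V_q}$ is harmonic, not that it is the unique harmonic map in its class — giving $\Delta E(q)\ge0$ for $E(q):=\mathcal E(f|_{V_q})$. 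Hence $E$ is subharmonic on $\bP^1\setminus\Sigma$.

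Next I would show that $E$ is constant and that every smooth fiber is holomorphic. With $\omega_V$ a Kähler form on $V$, conformal invariance of the energy together with restriction of the metric gives $E(q)\le\int_{V_q}e(f)\,dA_{\omega_V}\le\|e(f)\|_\infty\,\langle[\omega_V],[V_q]\rangle$, and $\langle[\omega_V],[V_q]\rangle$ is independent of $q$ because the smooth fibers are mutually homologous; so $E$ is bounded above. A subharmonic function on $\bP^1\setminus\Sigma$ bounded above extends across the finite set $\Sigma$ to a subharmonic function on the compact surface $\bP^1$, hence is constant. Now, for a harmonic map of a Riemann surface into the Kähler manifold $N$ one has, up to a universal positive constant, $\mathcal E(f|_{V_q})=\int_{V_q}f^*\omega_N+2\int_{V_q}e''(f|_{V_q})$, where $e''\ge0$ is the energy density of the $(0,1)$-part of $d(f|_{V_q})$ and $\int_{V_q}f^*\omega_N=\langle f^*[\omega_N],[V_q]\rangle$ is again $q$-independent. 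Since $E$ is constant and $e''$ vanishes on the given holomorphic fiber, $\int_{V_q}e''(f|_{V_q})=0$, hence $e''(f|_{V_q})\equiv0$, for every $q\in\bP^1\setminus\Sigma$: $f|_{V_q}$ is holomorphic for all regular $q$ (and, degenerate cases aside, non-constant, so that the alternative ``$f$ anti-holomorphic'' is excluded).

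The final step — deducing from ``$f$ is pluriharmonic and holomorphic on every fiber of the pencil'' that $f$ is itself holomorphic — is where I expect the main difficulty to lie, and where the hypothesis that the pencil is Lefschetz, rather than an arbitrary fibration, is used. Here I would return to the proof of Theorem~\ref{thm-psh}: since $\Delta E\equiv0$, for each regular $q$ equality holds throughout the chain of inequalities in that proof, so equality in the Schwarz step gives $d_\nabla''W=\pm\mu\,d'f$ on $V_q$, and equality in $a\le b+4\rho$ forces $\rho=0$, i.e. the non-positive Hermitian sectional curvature $R_N(\partial f/\partial z,W,\overline{\partial f/\partial z},\overline W)$ vanishes identically along $V_q$. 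At this point the rigidity analysis of Siu and Sampson enters: using that $f|_{V_q}$ is holomorphic (so $\partial f/\partial z$ is a $(1,0)$-vector along the image), the irreducibility of $D$, and the precise description of the directions in which the Hermitian curvature of a bounded symmetric domain degenerates, one should be able to conclude that the variation field $W$ is tangent to the image and, in the spirit of the proof of Theorem~\ref{thm-equality}, that the deformation of complex structure is obstructed unless $\bar\partial f\equiv0$. Passing to the blow-up $\pi\colon\hat V\to\bP^1$ of the base points of the pencil is the convenient device for replacing the rational pencil map by an honest fibration and for exploiting that each exceptional curve is a section of $\pi$ along which $f$ is constant. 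Making this last deduction precise is the crux of the theorem.
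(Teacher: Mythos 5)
Your first two-thirds track the paper's argument closely and are sound: pluriharmonicity via Siu--Sampson, subharmonicity of $E(q)=\mathcal{E}(f|_{V_q})$ on $\bP^1\setminus\Sigma$ via Theorem~\ref{thm-psh} (and you are right that uniqueness is not needed there, only a smooth family of harmonic maps, which restriction of the fixed pluriharmonic $f$ provides), and the Wirtinger/energy-decomposition step showing every smooth fiber is holomorphic. Your handling of the extension across $\Sigma$ is actually cleaner than the paper's: the paper asserts that $E$ extends continuously to $\bP^1$ and defers the work to Gromov, whereas your a priori bound $E(q)\le \|e(f)\|_\infty\,\langle[\omega_V],[V_q]\rangle$ together with the removable-singularity theorem for bounded-above subharmonic functions gives constancy of $E$ directly.

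The genuine gap is the final step, which you flag as ``the crux'' but do not carry out. Knowing that $f$ is pluriharmonic and that $f|_{V_q}$ is holomorphic for every regular $q$ controls $\bar\partial f$ only in the fiber directions, which are complex codimension one in $V$, so it does not by itself force $\bar\partial f\equiv 0$; and the route you gesture at (equality analysis in Theorem~\ref{thm-psh} plus Siu--Sampson degenerate-curvature rigidity) is not developed far enough to see that it closes this. The paper's mechanism is entirely different and is where the remaining content lies: consider the graphs $G(f_q)\subset V\times N$, which are algebraic curves of fixed degree passing through the $d$ points $(x_i,f(x_i))$ lying over the base points of the pencil. Using the Chow variety (or Barlet space, or Hilbert scheme), the conditions of passing through these points, projecting isomorphically onto some $V_q$, and having the degree of $G(f_{q_0})$ cut out a closed subvariety whose associated union of cycles is a closed algebraic subset $Z\subset V\times N$; the irreducible component $Z_0$ containing $G(f_{q_0})$ is then identified with $G(f)$ (using uniqueness of the holomorphic maps near $q_0$), so $G(f)$ is algebraic and $f$ is holomorphic. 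Without this globalization step, or a genuinely worked-out substitute, the theorem is not proved.
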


Recall that if $V\subset \bP^m$ is a smooth surface, a {\emph Lefschetz pencil} is the family of cuves obtained by fixing a linear $\bP^{m-2}\subset\bP^m$ transverse to $V$ and taking the intersection of $V$ with the pencil of $\bP^{m-1}$'s containing the fixed $\bP^{m-2}$.  This gives a family $V_q$ of curves parametrized by $a\in Q = \bP^1$.  The map $V\to Q$ is only a rational map, not defined at the basepoints of the pencil, namely the $d$ points $x_1,\dots ,x_d$  (where $d$ is the degree of $V$) where $\bP^{m-2}$ intersects $V$.   If $\hat V$ is the blow-up of $V$ at these points, then there is a well-defined holomorphic map $\hat V\to Q = \bP^1$ which fits our original setting.  This point will be mostly ignored.

\begin{cor}
\label{cor-zariskidense}
Let $V$, $\hat V\to \bP^1$, $N$ and $D$  be as above.  Let $g:V\to N$ be a smooth map and assume:
\begin{enumerate}
\item $g_*(\pi_1(V))$ is Zariski dense in the group $G$ of biholomorphisms of $D$.
\item For a generic fiber $V_q$ of the Lefschetz pencil, $g|_{V_q}$ is homotopic to a holomorphic map.
\end{enumerate}
Then $g$ is homotopic to a holomorphic map.
\end{cor}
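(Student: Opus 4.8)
The plan is to deduce the corollary from Theorem~\ref{thm-amplecurve} by replacing $g$ with the harmonic map in its homotopy class and then checking that this harmonic map is genuinely holomorphic on the good fibre, not merely homotopic to a holomorphic map. So there are two things to do: (i) produce a harmonic $f\simeq g$; (ii) promote ``$g|_{V_q}$ homotopic to a holomorphic map'' to ``$f|_{V_q}$ holomorphic''. Then Theorem~\ref{thm-amplecurve} finishes the job.

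For (i): since $D$ is an irreducible bounded symmetric domain, its biholomorphism group $G$ is a simple Lie group of non-compact type, and $N$, equipped with the metric descending from the Bergman metric of $D$, is K\"ahler with $K_\bC\le 0$ \cite{Sampson}. Because $\rho:=g_*:\pi_1(V)\to G$ has Zariski dense image it is reductive (its Zariski closure, being all of $G$, lies in no proper parabolic subgroup), so by the Eells--Sampson theorem --- or, in the non-compact case, its equivariant extension to reductive representations --- there is a harmonic map $f:V\to N$ homotopic to $g$, and it is unique because $\rho$ is Zariski dense in the simple group $G$ (the strong form of the uniqueness discussed around \cite{Sunada}). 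By the Siu--Sampson theorem \cite{Siu,Sampson}, $f$ is pluriharmonic; in particular its restriction to any smooth complex curve in $V$ is a harmonic map.

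For (ii): let $V_q\subset V$ be a generic member of the pencil over which $g|_{V_q}$ is homotopic to a holomorphic map $h_q$; note $h_q$ is non-constant (as $\rho$ is nontrivial) and is harmonic, being holomorphic into the K\"ahler manifold $N$. A generic member of a Lefschetz pencil is a smooth ample divisor in the surface $V$, so by the Lefschetz hyperplane theorem $\pi_1(V_q)\to\pi_1(V)$ is surjective; hence $(g|_{V_q})_*(\pi_1(V_q))=g_*(\pi_1(V))$ is itself Zariski dense in $G$, and the harmonic map in the homotopy class of $g|_{V_q}$ is unique. Now $f|_{V_q}$ is harmonic by pluriharmonicity and homotopic to $g|_{V_q}$, hence to $h_q$, so $f|_{V_q}=h_q$ is holomorphic. (Passing to the blow-up $\hat V\to\bP^1$ on which the pencil becomes a morphism is harmless, since the proper transform of $V_q$ is isomorphic to $V_q$.) With this, all hypotheses of Theorem~\ref{thm-amplecurve} hold for $f$, so $f$ is holomorphic, and $g\simeq f$ gives the corollary.

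I expect the uniqueness step in (ii) to be the crux: the whole argument hinges on transporting Zariski density from $\pi_1(V)$ to $\pi_1(V_q)$ via Lefschetz surjectivity, and on the fact that Zariski density in the simple group $G$ forces the equivariant harmonic map to be unique --- it is precisely this that converts ``homotopic to holomorphic'' into ``equal to holomorphic''. The remaining technical issue, the behaviour near the singular fibres of the pencil and the bookkeeping between $V$ and $\hat V$, is already internal to Theorem~\ref{thm-amplecurve} and needs nothing further here.
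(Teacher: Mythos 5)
Your proposal is correct and follows essentially the same route as the paper: deform $g$ to a harmonic, hence pluriharmonic, map $f$, use uniqueness of the harmonic map in a homotopy class (guaranteed by Zariski density of the image of the fundamental group) to conclude that $f|_{V_q}$ coincides with the given holomorphic representative, and then run Theorem~\ref{thm-amplecurve}. Your explicit appeal to Lefschetz surjectivity $\pi_1(V_q)\twoheadrightarrow\pi_1(V)$ to transport Zariski density to the fibre is a detail the paper leaves implicit; the only point you gloss over is the paper's remark that the corollary rests on the \emph{proof} of Theorem~\ref{thm-amplecurve} rather than its bare statement, because the final step of that proof (identifying nearby curves $C$ with graphs $G(f_q)$) itself requires uniqueness of the harmonic maps $f_q$, which is again supplied by Zariski density.
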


\noindent{\sl Sketch of Proof of Theorem \ref{thm-amplecurve}}:  

\begin{enumerate}
\item By \cite{Siu} the map $f$ is pluriharmonic, thus for $q\in \bP^1\setminus\Sigma$, the map $f_q = f|_{V_q}$ is harmonic.   
  
\item  Strictly speaking, Theorem \ref{thm-energypsh} applies to the maps $f_{\tilde q}:V_{\tilde q}\to N$ for $\tilde q\in \overbrace{\bP^1\setminus\Sigma}$, the universal cover of $\bP^1\setminus\Sigma$ to Teichm\"uller space, giving that $E(f_{\tilde q})$ is plurisubharmonic on $ \overbrace{\bP^1\setminus\Sigma}$.    But the map $\tilde q\to V_{\tilde q}$ of $ \overbrace{\bP^1\setminus\Sigma}$ to Teichm\"uller space is  invariant under the subgroup of the mapping class group of $V_{\tilde q}$ that preserves the homotopy class of $f_{\tilde q}$, thus it descends to $\bP^1\setminus\Sigma$, thus Theorem \ref{thm-energypsh} applies.

\item  By Theorem \ref{thm-energypsh}, $E(q) = E(f_q)$ is a plurisubharmonic function on $\bP^1\setminus\Sigma$.
\item $E$ extends to a continuous function on $\bP^1$.  This requires some work, see \S 4.6  of \cite{Gromov}.
\item Suppose that for some $q_0\in Q\setminus \Sigma$ the map $ f_{q_0}$ is holomorphic, and denote this map simply by $f_0$.  A simple argument based on Wirtinger's inequality gives that  all $f_q$, $q\in Q$, are holomorphic.  Namely, for each $q\in Q\setminus\Sigma$ we have the inequalities
\begin{equation*}
\int_{V_q}f_q^*\omega\le A(f_q)\le E(f_q)
\end{equation*}
where $\omega$ is the K\"ahler form of $N$ and $A$ is area.  The quantity on the left is constant and the first equality occurs if and only if $f_q$ is holomorphic.  If the quantity on the right is also constant and equal to the left at $q_0$, then equalities hold for all $q$. 

\item Let $G(f)\subset V\times N$ be the graph of $f$. To prove that $f$ is holomorphic, Gromov constructs a closed, irreducible algebraic surface in $V\times N$ that contains $G(f)$, thus must equal $G(f)$, thereby proving that $f$ is holomorphic.    We explain one possible  construction.
\begin{enumerate}
\item Assume, for simplicity, that $N$  is compact, hence projective.  Let $Z\subset V\times N\subset \bP^K$ be the union of all algebraic curves $C\subset M\times N$ with the following properties:
\begin{enumerate}
\item $(x_1,y_1),\dots , (x_d,y_d)\in C$, where $x_1,\dots,x_d$ ae the basepoints of the Lefschetz pencil and $y_i = f(x_i)$.
\item There exists $q\in Q$ so that $p_V|_C$  maps $C$ biholomorphically onto $V_q$.
\item The degree of $C$ equals the degree of $G(f_q)$, the graph of $f_q$.
\end{enumerate}
\item
To prove that $Z$ is algebraic, let $\mathcal{X}$ be the Chow variety of curves of degree $d'$ in $\bP^K$, where $d'$ is the degree of $G(f_{q_0})$ in this embedding.  See, for example, Chapter I of \cite{Kollar} for a detailed construction of the Chow variety.   This is an algebraic variety parametrizing algebraic cycles of degree $d'$ in $\bP^K$, and there is a universal cycle $\mathcal{C}\to\mathcal{X}$.  The curve $G(f_{q_0})$ defines a point in $\mathcal{X}$, and the conditions (i) to (iii) above define a closed subvariety $\mathcal{Z}$ of $\mathcal{X}$.  The projection of $\mathcal{C}|_{\mathcal{Z}}$ to $\bP^K$ is a closed subvariety $Z\subset V\times N$.

\item Let $Z_0$ be the irreducible component of $Z$ containing $G(f_0)$, the graph of $f_0$.  If $C\subset Z_0$ is one of the curves near $G(f_0)$, it is the graph of a holomorphic map from some $V_q$ to $N$ in the same homotopy class as $f_q$ and taking $x_1$ to $y_1$.  By uniqueness we must have $C = G(f_q)$, from which it follows that a neighborhood of $G(f_0)$ in $Z_0$ concides with a neighborhood of $G(f_0)$ in $G(f)$.   Since $G(f)\subset Z_0$, it follows that $G(f) = Z_0$.  Sine the latter is algebraic, it follows that $f$ is holomorphic, as desired.
\item If $N$ is not compact, it is still K\"ahler and we can use Barlet's Chow scheme \cite{Barlet}  and Lieberman's compactness theorem  \cite{Lieber} in the same way to prove that $Z_0$ is analytic, hence $f$ is holomorphic.  In the algebraic case, the Hilbert scheme could be used, instead of the Chow variety, to the same effect.
\end{enumerate}
\end{enumerate}
This completes the sketch of proof of Theorem \ref{thm-amplecurve}.   

To prove Corollary \ref{cor-zariskidense}, we first note that it is a consequence of the proof of Theorem \ref{thm-amplecurve} rather than its statement.   Zariski density of the image of the fundamental group implies that for each complex structure in the domain ($V$ or one of the $V_q$), the harmonic map in the homotopy class is unique, see \cite{Sunada} or the \lq\lq split deformation property" of \S 4.6 of \cite{Gromov}.   For the proof of the corollary one deforms $g$ to a harmonic, hence pluriharmonic, map $f$, then obseves that by uniqueness $f_{q_0}$ is holomorphic, then proceeds with the same proof as before, at the last step appealing again to Zariski density for uniqueness.

\medskip
\parskip=0pt

\end{document}